\numberwithin{equation}{section}
\numberwithin{figure}{section}
\newtheorem{theorem}{Theorem}[section]
\newtheorem{lemma}[theorem]{Lemma}
\newtheorem{proposition}[theorem]{Proposition}
\newtheorem{corollary}[theorem]{Corollary}
\theoremstyle{definition}
\newcommand{\C}{{\mathbb{C}}}
\renewcommand{\b}{\mathfrak{b}}
\newcommand{\fl}{\mathcal{F}\ell}
\newcommand{\spn}{\mathfrak{sp}_n}
\newcommand{\defi}{\textbf}
\newcommand{\dom}{\backslash}
\newcommand{\pf}{\text{pf}}
\newcommand{\init}{\mbox{init }}
\definecolor{light-gray}{gray}{0.95}
\begin{document}

\title{The Orbits of the Symplectic Group on the Flag Manifold}

\author{Anna Bertiger}


\keywords{}

\date{\today}


\begin{abstract}
 We examine the orbits of the (complex) symplectic group, $Sp_n$, on the flag manifold, $\fl(\C^{2n})$, in a very concrete way.  We use two approaches: we Gr\"obner degenerate the orbits to unions of Schubert varieties (for a equations of a particular union of Schubert varieties see \cite{NWunions}) and we find a subset $\mathcal{A}$ of the orbit closures containing the basic elements of the poset of orbit closures under containment, which represent the geometric and combinatorial building blocks for the orbit closures.  
\end{abstract}

\maketitle

\section{Introduction}

The problem of understanding the $Sp_n$ orbits on $GL_n/B$ is a particular instance of the more general problem of studying the orbits of $K$ on $G/B$, where $K$ is the set of fixed points of an involution $\theta$ on $G$.  This problem is of interest in geometric representation theory.  We examine the particular case of $G=GL_n\C$ and $K=Sp_n$ because it lends itself to using tools of Frobenius splitting, particularly in the proof of Theorem \ref{thm: whichSchuberts}.  

Using different approaches, we present two main results about the action of the symplectic group on the flag manifold.   We give precise statements as Theorems \ref{thm:basicElts} and \ref{thm: whichSchuberts}, but 
describe the results informally here.  

We first examine the poset of $Sp_n$ orbit closures under containment, reducing the problem by finding basic elements of this poset.  Basic elements of a poset are a set of elements from which all other elements can be found by taking the meet of a subset of the basic elements.  They take on the role of a basis in a vector space in the sense of encapsulating the structure of the object.  In the case of a poset of varieties under containment with reduced intersections, as in the case of $Sp_n$ orbit closures on the flag manifold, the defining equations of the basic varieties would be sufficient to easily produce equations for all other varieties.  
\begin{theorem}[informal version of Theorem \ref{thm:basicElts}]
There is a combinatorially defined set $\mathcal{A}$ which contains the basic elements of the partially ordered set of orbit closures under containment.  All other orbit closures can be found by intersecting the basic orbit closures.  
\end{theorem}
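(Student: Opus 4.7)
The plan is to exploit the Richardson--Springer parameterization of symmetric subgroup orbits: the $Sp_n$-orbits on $\fl(\C^{2n})$ are indexed by the fixed-point-free involutions in $S_{2n}$, and the closure order is a known Bruhat-like order on these involutions. Concretely, an orbit is determined by the collection of ranks $r_{ij}(F_\bullet) = \dim(F_i \cap F_j^{\perp})$, where $F_j^{\perp}$ denotes the symplectic complement of $F_j$; hence each orbit closure is cut out set-theoretically in $\fl(\C^{2n})$ by a list of inequalities of the form $\dim(F_i \cap F_j^{\perp}) \geq c_{ij}$.

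With this in hand, I would define $\mathcal{A}$ combinatorially to consist of the orbit closures whose rank data differs from the generic (open-orbit) data by exactly one minimal increment, so that each element of $\mathcal{A}$ is cut out by a single indecomposable rank condition. Two combinatorial checks then remain. First, the meet in the closure poset corresponds, on the level of rank matrices, to pointwise maximum. Second, the rank matrix of any fixed-point-free involution is the pointwise maximum of the one-condition matrices that dominate it. Taken together, these show that every orbit closure is a meet of elements of $\mathcal{A}$; since any meet-irreducible (basic) element of the poset must lie in every meet-generating set, it follows that $\mathcal{A}$ contains the basic elements.

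The main obstacle I anticipate is upgrading this from a set-theoretic identification to a reduced scheme-theoretic intersection, so that the defining equations of the basic orbit closures genuinely cut out all the others, as motivated in the introduction. For this I would lean on the Frobenius splitting framework foreshadowed there: establishing that the $Sp_n$-orbit closures on $\fl(\C^{2n})$ are compatibly Frobenius split implies that scheme-theoretic intersections among them are automatically reduced, and the combinatorial meet formula then lifts to an equality of subvarieties. The set $\mathcal{A}$ may well be strictly larger than the set of basic elements, which is why the theorem claims only containment; pinning down the basic elements exactly would require checking, for each candidate in $\mathcal{A}$, that it cannot itself be written as a nontrivial meet.
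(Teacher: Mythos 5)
Your overall strategy---parameterize orbits by fixed-point-free involutions, read off rank conditions, use the fact that the closure order is governed by entrywise comparison of rank matrices, and then realize each orbit closure as a meet of one-condition elements---is the right skeleton, and it matches the paper at that level. But there is a genuine gap in the definition and handling of $\mathcal{A}$ that would make the argument break down.

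You define $\mathcal{A}$ as those orbit closures cut out by ``a single indecomposable rank condition.'' For ordinary matrix Schubert varieties (Fulton's theorem) this works: the basic elements under opposite Bruhat order are exactly the permutations with one essential box. In the symplectic setting this is false because of an antisymmetry constraint that you do not account for. The paper's Lemma~\ref{lemma:OddRanks} shows that if a symplectic diagram has a box at $(i,j)$ carrying an \emph{odd} rank condition $2k+1$, then the rank at $(i-1,i)$ is automatically forced to be at most $2k$; in particular, there is \emph{no} fixed-point-free involution with a single symplectic essential box carrying an odd rank condition. This is why the paper splits $\mathcal{A}=\mathcal{A}_e\cup\mathcal{A}_o$: the elements of $\mathcal{A}_e$ have one essential box with even rank condition, while the elements of $\mathcal{A}_o$ necessarily have \emph{two} essential boxes (an even condition $2r$ on the superdiagonal paired with the desired odd condition $2r+1$ one row down). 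Your ``single indecomposable rank condition'' picture has no element realizing an odd essential rank condition, so the claimed meet decomposition fails for any $\iota$ whose symplectic diagram has an odd essential box. Relatedly, you never verify that the elements of $\mathcal{A}$ you need actually exist as fixed-point-free involutions; the paper handles this by explicitly constructing $\iota_e$, $\iota_e'$, $\iota_o$, $\iota_o'$ by wiring-diagram surgery, with a parity case analysis on $i-j$. Finally, the Frobenius-splitting discussion you append is beside the point for this theorem: Theorem~\ref{thm:basicElts} is a purely order-theoretic statement about the poset of fixed-point-free involutions, and its proof (show $\iota\le\iota'$ for each chosen $\iota'\in\mathcal{A}$, then observe no extra conditions were imposed) does not invoke reducedness of intersections.
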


Secondly, we Gr\"obner degenerate the orbit closures of $Sp_n$ on $\fl(\C^{2n})$.  For each orbit closure, we describe the variety corresponding to the initial form of the ideal defining the orbit closure.  
\begin{theorem}[informal version of Theorem \ref{thm: whichSchuberts}]
The orbits of $Sp_n$ on $GL_n\C/B$ each Gr\"obner degenerate, or deform along a flat family, to a union of Schubert varieties given by a combinatorial operation on the fixed point free involution indexing the orbit.  
\end{theorem}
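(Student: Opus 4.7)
The plan is to realize each $Sp_n$-orbit closure $\overline{Y_\pi}$, indexed by a fixed-point-free involution $\pi \in S_{2n}$, on an affine chart of $\fl(\C^{2n})$ as a subscheme cut out by explicit polynomial equations, choose a term order on that chart, and identify the resulting initial ideal with the ideal of a union $\bigcup_{w \in \Phi(\pi)} X_w$ of Schubert varieties for a combinatorially defined set $\Phi(\pi)$.

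First I would write $\overline{Y_\pi}$ on the opposite big cell of $B \dom GL_{2n}/B$, with matrix coordinates $z_{ij}$, as the vanishing locus of two families of equations: rank conditions coming from the Richardson--Springer rank function of $\pi$ relative to the reference flag, together with Pfaffian or antisymmetry conditions encoding invariance under the symplectic form. All generators are determinantal or Pfaffian in the $z_{ij}$. Next, following the Knutson--Miller strategy for matrix Schubert varieties, I would impose the antidiagonal term order; each minor degenerates to its antidiagonal monomial, and I expect the resulting initial ideal to be squarefree, making the initial scheme a reduced union of coordinate subspaces that are the opposite-cell slices of Schubert varieties.

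The combinatorial operation on $\pi$ would then be extracted from the facets of the subword complex attached to this initial ideal: each facet is a pipe dream, which is the unique such dream for some permutation $w$, and $\Phi(\pi)$ is the set of permutations that arise. I expect $\Phi(\pi)$ to admit a clean intrinsic description in terms of linear extensions of the arc poset of the matching associated with $\pi$, in the spirit of the Wyser--Yong combinatorial formulas for $K$-orbit closures on $GL_n/B$. Matching Hilbert series on the two sides would then confirm that the set-theoretic identification is a scheme-theoretic equality.

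The main obstacle is proving flatness and reducedness of the degeneration: the initial ideal must cut out a reduced union of Schubert varieties, not merely set-theoretically. This is precisely where the Frobenius splitting techniques mentioned in the introduction enter. I would construct a Frobenius splitting of $\fl(\C^{2n})$, or better of the total space of a one-parameter family interpolating between $\overline{Y_\pi}$ and its degeneration, that compatibly splits each orbit closure together with every Schubert variety appearing in the expected limit. Standard splitting machinery (compatible Mehta--Ramanathan splittings as developed by Brion--Kumar) then forces the special fiber to be reduced, yielding the desired scheme-theoretic identification.
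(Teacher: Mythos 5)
Your proposal takes a fundamentally different route from the paper's, and it has a genuine obstruction that the paper deliberately engineers around. You plan to write explicit (determinantal/Pfaffian) generators for $I(Y_\iota)$, impose the antidiagonal term order, degenerate each generator to its antidiagonal monomial, and then read off the limit from the Stanley--Reisner facets. That plan presupposes a Gr\"obner basis for $I(Y_\iota)$, and the paper explicitly flags that no such basis is known --- it describes the situation as ``a rare case in which we can describe the initial ideal but not the Gr\"obner basis for the original ideal.'' Moreover the paper does not use the standard antidiagonal term order; it uses the coarser column weighting $t^{\lceil j/2\rceil - 1}$, which deliberately ties the two columns in each symplectic pair. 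With that weight the ``initial form'' of a polynomial is a sum of several tied terms, not a single monomial, so the Knutson--Miller antidiagonal-monomial mechanism you invoke simply does not apply as stated. Your proposed combinatorial output (subword complex facets, linear extensions of the arc poset) also does not match the paper's answer, which is the set of \emph{pair permutations} $P(\iota)$.

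The paper's argument sidesteps commutative algebra almost entirely. It cites Brion to conclude a priori that $Y_\iota$ degenerates to a \emph{reduced} union of Schubert varieties under this weight order --- so flatness and reducedness are imported rather than proved via a bespoke Frobenius splitting of an interpolating family. With the shape of the limit known, the remaining task is to identify \emph{which} Schubert varieties appear, and the paper does this geometrically: by Borel's fixed point theorem one replaces the intersection $(B_-\dom Y_\iota)\cap X^w$ by its $T_{Sp_n}$-fixed locus, which is computed to be $\{v: vJv^{-1}=\iota,\ v\ \text{of minimal length}\}$. Lemma \ref{lemma:PermsRight} identifies these as exactly the pair permutations, Lemma \ref{lemma:dimsRight} matches dimensions so that these $X^w$ are codimension-complementary to $Y_\iota$, and Lemma \ref{lemma:SchemesRight} gives transversality at the fixed point, pinning down the multiplicity-one components. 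So the key idea you are missing is that one need not compute $\init I(Y_\iota)$ from generators at all; Brion's theorem and torus fixed-point analysis determine the limit, and the combinatorics (pair permutations) drops out of the minimal coset representative condition, not from pipe dreams.
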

Algebraically, this is a rare case in which we can describe the initial ideal but not the Gr\"obner basis for the original ideal.  

One reasonable starting point for the history of this problem is a paper of Richardson and Springer \cite{RichardsonSpringer1990}.  In that paper and its successor \cite{RichardsonSpringer1993}, Richardson and Springer give combinatorial criteria for containment of orbit closures $\overline{BgK}$ as $g$ ranges over $G$.  Later, Springer \cite{Springer2003} produced invariants for a reduced decomposition of $B$ acting on $G/K$ and used the reduced decomposition of $G/K$ to look at the decomposition of the wonderful compactification of $G/K$.  Brion and Helminck \cite{BrionHelminck} studied the geometry of $K$ orbits on $G/P$ for more general parabolics $P$.  He and Thomsen \cite{HeThomsen} used results of Knutson \cite{KnutsonFrob} to investigate the Frobenius splitting of these orbits.  Wyser \cite{Wyser} gave a correspondence between $K$ orbits on $G/B$ and Richardson varieties for certain $G$ and $K$ including $G=GL_n\C$ and $K=Sp_n$.  Hultman \cite{Hultman} gave combinatorial criteria for rational smoothness of particular symmetric orbit closures.  

On the purely combinatorial side, Ignatyev \cite{Ignatyev} showed that the order given by containment of $Sp_n$ orbit closures in the flag manifold corresponds to the opposite Bruhat order on fixed point free involutions on $\{1, \ldots, 2n\}$.  Happily, Incitti \cite{Incitti} had already studied the Bruhat order on involutions in the symmetric group.  

\subsection{Background and Notation}

Let $J$ be the $2n \times 2n$ block diagonal matrix with diagonal blocks $J_2=\left(\begin{smallmatrix}0 & 1 \\-1 & 0\end{smallmatrix}\right)$.  The \defi{symplectic group}, $Sp_n$, is the subgroup of $2n \times 2n$ invertible matrices with complex entries that preserve the symplectic form $J$, i.e. $Sp_n=\{M \in GL_{2n}(\C) : MJM^T=J\}$.  

The \defi{flag manifold}, $\fl(\C^n)$, is composed of nested chains of vector subspaces $\{V_0 \subsetneq V_1 \subsetneq \cdots \subsetneq V_n\}$ of $\C^n$, where each subspace $V_i$ is of dimension $i$.  We can also think of $\fl(\C^n)$ as $B\dom GL_n\C$, where $B=B_-$ is the subgroup of invertible lower triangular $n \times n$ matrices.  From this point of view, $Sp_n$ acts naturally on $\fl(\C^{2n})$ by right multiplication by an inverse on the cosets of $B \dom GL_{2n}\C$.  For technical ease, we shall study the $B \times Sp_n$ orbits on $GL_{2n}\C$, rather than the $Sp_n$ orbits on $\fl(\C^{2n})$.  

We begin with some background on the flag manifold and the symmetric group.  Let $B_+$ be the subgroup of upper triangular invertible matrices and let $M_\pi$ be the permutation matrix for a permutation $\pi$.  The \defi{Schubert variety} $X_\pi$ in $\fl(\C^n)$ is $B \dom \overline{B M_\pi B_+}$.   The corresponding \defi{matrix Schubert variety} $\overline{X}_\pi$ is the closure  $\overline{B_-M_\pi B_+}$ in the space of all $n \times n$ matrices.    Matrix schubert varieties were discovered by Fulton in his study of $\fl(\C^n)$ \cite{Fulton1992}.   Knutson and Miller \cite{KnutsonMiller2005} gave combinatorial results describing the algebro-geometric properties of matrix Schubert varieties using a Gr\"obner basis approach.  We summarize below the portion of those results applicable to our study here.  

The \defi{Rothe diagram} of a permutation  is the cells that remain in the permutation matrix after crossing out the cells weakly below and to the right of each $1$.  For example, the Rothe diagrams for $2143$ and $15432$ are given in Figure \ref{fig:essSets}.  The \defi{essential boxes} \cite{Fulton1992} of a permutation are those boxes in the Rothe diagram that do not have any boxes of the diagram immediately south or east of them.  These are the boxes are marked with $e$ in Figure \ref{fig:essSets}.  

\begin{figure}
\begin{center}
\begin{tikzpicture}[x=.75cm, y=.75cm] 
\draw (.5,2.5) node{$1$};
\draw [dashed, thick] (.5,0)--(.5,2.5)--(4,2.5);
\draw (1.5,3.5) node{$1$};
\draw [dashed, thick] (1.5,0)--(1.5,3.5)--(4,3.5);
\draw (3.5,1.5) node{$1$};
\draw [dashed, thick] (3.5,0)--(3.5,1.5)--(4,1.5);
\draw (2.5,.5) node{$1$};
\draw [dashed, thick] (2.5,0)--(2.5,.5)--(4,.5);
\draw (0,3)--(1,3)--(1,4)--(0,4)--(0,3);
\draw (.5,3.5) node{$e$};
\draw (2,1)--(3,1)--(3,2)--(2,2)--(2,1);
\draw (2.5,1.5) node{$e$};
\draw (6.5,4.5) node{$1$};
\draw [dashed, thick] (6.5,0)--(6.5,4.5)--(11,4.5);
\draw (10.5,3.5) node{$1$};
\draw [dashed, thick] (10.5,0)--(10.5,3.5)--(11,3.5);
\draw (9.5,2.5) node{$1$};
\draw [dashed, thick] (9.5,0)--(9.5,2.5)--(11,2.5);
\draw (8.5,1.5) node{$1$};
\draw [dashed, thick] (8.5,0)--(8.5,1.5)--(11,1.5);
\draw (7.5,.5) node{$1$};
\draw [dashed, thick] (7.5,0)--(7.5,.5)--(11,.5);
\draw (7,1)--(7,4)--(10,4)--(10,3)--(7,3);
\draw (7,2)--(9,2)--(9,4);
\draw (7,1)--(8,1)--(8,4);
\draw (7.5,1.5) node{$e$};
\draw (8.5,2.5) node{$e$};
\draw (9.5,3.5) node{$e$};
\end{tikzpicture}
\caption{The Rothe diagrams and essential sets of $2143$ (left) and $15432$ (right).}
\label{fig:essSets}
\end{center}
\end{figure}
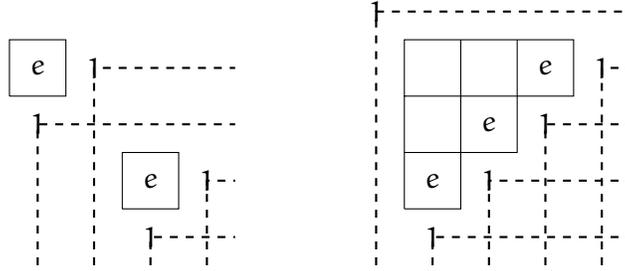

The \defi{rank matrix}  of $\pi$, $r(\pi)$, has entries $r_{ij}(\pi)=\#\{k \le i : \pi(k) \le j\}$.  For example, the rank matrix of $13425$ is given by:

\[
\left(\begin{array}{ccccc}1 & 1 & 1 & 1 & 1 \\1 & 1 & 2 & 2 & 2 \\1 & 1 & 2 & 3 & 3 \\1 & 2 & 3 & 4 & 4 \\1 & 2 & 3 & 4 & 5\end{array}\right).  
\] 
We will impose the \defi{(strong) Bruhat order} on elements of $S_n$, where $\pi$ covers $\rho$ if $\pi=\rho t_{ij}$ and $l(\pi)=l(\rho)+1$.  Here $t_{ij}$ is the transposition that switches $i$ and $j$.  For example $2134$ covers $1234$ but $3214$ does not.  Equivalently, the Bruhat order is given by entry-wise comparison of the rank matrices of the two permutations, i.e. $\pi \ge \rho$ if and only if $r_{ij}(\pi) \ge r_{ij}(\rho)$ for all $i$ and $j$.  This corresponds to the ordering of Schubert varieties by reverse containment.  In this paper the order on $S_n$ that will be most useful is the \defi{opposite Bruhat order}, reversing the relations in Bruhat order, and ordering the Schubert varieties by containment.   

The \defi{basic elements} \cite{KnutsonFrob} of a partially ordered set are the elements from which all other elements can be found by taking the unique greatest lower bound of larger basic elements in the poset.  For example, the basic elements in $S_n$ under the opposite Bruhat order are those permutations with only one essential box.  The collection $\{X_\pi: \pi \text{ is a basic element of } S_n\}$ are the basic elements in the poset of Schubert varieties ordered by containment.   Each Schubert variety $X_\pi$ is the intersection over the essential boxes of $\pi$ of the Schubert varieties with only one essential box and the same essential rank condition as the corresponding essential rank condition of $\pi$.  If the subsets in the poset are compatibly split, as they are in the Schubert case, then finding the equations for the basic elements is sufficient to find equations for all elements.  This is because intersecting varieties corresponds to summing their ideals, i.e. concatenating their lists of ideal generators.  

\begin{theorem}[\cite{Fulton1992}]\label{thm:Fulton1992}
Matrix Schubert varieties have corresponding radical ideal $I(\overline{X}_\pi)=I_\pi$ given by determinants representing conditions given in the rank matrix $r(\pi)$, that is, the $(r_{ij}(\pi)+1) \times (r_{ij}(\pi)+1)$ minors of each northwest $i \times j$ submatrix of a matrix of variables.  In fact, it is sufficient to impose only those rank conditions $r_{ij}(\pi)$ such that $(i,j)$ is an essential box for $\pi$.  
\end{theorem}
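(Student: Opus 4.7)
The plan is to prove the ideal equality $I(\overline{X}_\pi) = I_\pi$ in two stages, then reduce the generating set to essential boxes. First we would establish the containment $I_\pi \subseteq I(\overline{X}_\pi)$ by showing that rank conditions are invariant under the $B_- \times B_+$ action. If $L \in B_-$ is lower triangular, then the first $i$ rows of $LA$ are a linear combination of only the first $i$ rows of $A$, since $L_{kj}=0$ for $j > k$; dually, if $R \in B_+$ is upper triangular, the first $j$ columns of $AR$ involve only the first $j$ columns of $A$. Consequently the rank of the northwest $i \times j$ submatrix of $LAR$ equals that of $A$. Since $M_\pi$ has northwest $i \times j$ rank exactly $r_{ij}(\pi)$ by the definition of the rank matrix, every matrix in $B_- M_\pi B_+$ satisfies the same conditions, and vanishing of the $(r_{ij}(\pi)+1) \times (r_{ij}(\pi)+1)$ minors of the northwest $i \times j$ submatrix extends by closure to $\overline{X}_\pi$.

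For the reverse containment and the radicality statement, our approach is Gr\"obner degeneration of $I_\pi$ with respect to an antidiagonal term order, following the strategy later formalized by Knutson--Miller. Under this order the initial term of each minor is the product of entries along its antidiagonal, so $\init(I_\pi)$ is a squarefree monomial ideal. We would then identify $\init(I_\pi)$ as the Stanley--Reisner ideal of an equidimensional, shellable simplicial complex (the pipe dream complex of $\pi$), whose facets give the expected dimension $n^2 - \ell(\pi) = \dim \overline{X}_\pi$. Squarefreeness forces $\init(I_\pi)$ to be radical, and standard flat-family arguments transfer radicality back to $I_\pi$ itself. Since $V(I_\pi)$ then has the same dimension as $\overline{X}_\pi$ and contains it, while $\overline{X}_\pi$ is irreducible as a $(B_- \times B_+)$-orbit closure, the two varieties coincide, and radicality upgrades this to equality of ideals.

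The essential-box reduction is a local combinatorial calculation. If $(i,j)$ lies in the Rothe diagram but is not essential, then $(i+1,j)$ or $(i,j+1)$ lies in the diagram as well. In the first case, $\pi(i+1) > j$, so row $i+1$ contributes no new $1$'s in the first $j$ columns, giving $r_{i+1,j}(\pi) = r_{ij}(\pi) =: r$. The condition at $(i+1,j)$ requires the vanishing of all $(r+1) \times (r+1)$ minors of the northwest $(i+1) \times j$ submatrix; but this collection already contains every $(r+1) \times (r+1)$ minor of the northwest $i \times j$ submatrix, so the condition at $(i,j)$ is subsumed. The column case is symmetric. Iterating, any non-essential rank condition can be pushed southeast until it either reaches an essential box or exits the diagram, and boxes outside the diagram impose no new constraint. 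Hence the essential rank conditions suffice to generate $I_\pi$.

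The main obstacle is the primality (equivalently, irreducibility) of $V(I_\pi)$, since the naive generation by many nested families of minors does not visibly display $V(I_\pi)$ as a single orbit closure. Classical determinantal ideals of a generic matrix are prime via well-developed tools such as Hodge algebras, ASL structures, or Gr\"obner methods, but the simultaneous imposition of rank bounds on many overlapping northwest submatrices here complicates the picture. The Gr\"obner degeneration above finesses this by reducing the question to a combinatorial property of the pipe-dream complex, and the bulk of the remaining work lies in verifying that the antidiagonal initial ideal really equals the Stanley--Reisner ideal of this complex and in checking the required equidimensionality and shellability.
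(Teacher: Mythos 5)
The paper does not prove this theorem: it is imported as a citation to Fulton's 1992 paper, with the separate Gr\"obner basis statement credited to Knutson--Miller. So there is no ``paper's own proof'' to compare against, but it is worth noting that your sketch follows the Knutson--Miller route rather than Fulton's. Fulton established primality and radicality of $I_\pi$ by a localization induction that reduces to a single northwest rank condition (a classical generic determinantal ideal, known to be prime and Cohen--Macaulay), together with the chain argument for essential boxes; the antidiagonal Gr\"obner degeneration to the pipe-dream / subword complex and the shellability argument you invoke appeared thirteen years later. Both are valid proofs, and the paper cites both sources, so your choice of route is reasonable.

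There is, however, a genuine gap in your second paragraph. From $V(I_\pi) \supseteq \overline{X}_\pi$, equality of dimensions, and irreducibility of $\overline{X}_\pi$, you cannot conclude $V(I_\pi) = \overline{X}_\pi$; the larger set could have additional top-dimensional components (consider a union of two lines containing one of them). What actually closes the argument in Knutson--Miller is a multidegree comparison: the $T\times T$-equivariant class of the initial scheme is, by the pipe-dream formula, the double Schubert polynomial $\mathfrak{S}_\pi$, which is also the class of $\overline{X}_\pi$; since the class of any nonempty extra component would contribute positively, there can be none. (Alternatively, one proves directly that $V(I_\pi)$ is irreducible, which is essentially what Fulton did.) You flag ``primality of $V(I_\pi)$'' as the main obstacle in your final paragraph, but the deduction in your second paragraph presents the conclusion as already following from dimension and containment, so the argument as written is circular. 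A lesser issue: in the essential-box reduction, the claim that ``boxes outside the diagram impose no new constraint'' needs justification --- those cells do carry rank conditions, and one must check that the southeast chain either terminates at an essential box or reaches a cell $(i,j)$ with $r_{ij}(\pi) = \min(i,j)$, where the $(r_{ij}+1)$-minors vanish identically.
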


Henceforth, we shall call the generators for $I_\pi$ given by the essential rank conditions the \defi{Fulton generators} for $I_\pi$.  

Fix a total ordering on the monomials of a polynomial ring, perhaps by putting incommensurable weightings on the variables and comparing the total weight of each monomial.  If $1 < m$ for all monomials $m$ and if $m < n$ implies $pm < pn$ for all monomials $p$, we will call such an ordering of the monomials a \defi{term order}.  The largest monomial appearing in any polynomial $f$ is the \defi{initial term} of $f$, denoted $\init f$. For example, in polynomials in one variable the initial term of a polynomial is its largest degree term.   

The \defi{initial ideal} of an ideal $I$ is $\init I := \langle \init f: f \in I \rangle$.  A subset $\{f_1, \ldots, f_r\}$ of $I$ is a \defi{Gr\"obner basis} for $I$ if $\init I = \langle \init f_1, \ldots , \init f_r \rangle$.  Notice that a Gr\"obner basis for $I$ is also a generating set for $I$.  If $I$ is the defining ideal for a scheme, the scheme corresponding to $\init I$ is geometrically related to the one corresponding to $I$; the initial scheme can be found from the original scheme by deforming along a flat family.  

In the study of Schubert varieties we will be particularly interested in antidiagonal term orders.  The \defi{antidiagonal} of a matrix is the diagonal series of cells in the matrix running from the most northeast to the most southwest cell.  The \defi{antidiagonal term} (or \defi{antidiagonal}) of a determinant is the product of the entries in the antidiagonal.  For example, the antidiagonal of $\left(\begin{smallmatrix}a & b \\c & d\end{smallmatrix}\right)$ is the cells occupied by $b$ and $c$, and correspondingly, in the determinant $ad-bc$ the antidiagonal term is $bc$.  Term orders that select antidiagonal terms as the initial term of a determinant, called \defi{antidiagonal term orders}, have proven especially useful for ideals of matrix Schubert varieties.   There are several possible implementations of an antidiagonal term order on an $n \times n$ matrix of variables, for example weighting the top right entry the highest and rastering across the matrix right to left and then top to bottom, reducing weights as entries are passed.  One use of antidiagonal term orders is a result of Knutson and Miller's:

\begin{theorem}[\cite{KnutsonMiller2005}]
The Fulton generators for the ideal $I_\pi$ form a Gr\"obner basis under any antidiagonal term order.  
\end{theorem}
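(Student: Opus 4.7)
The plan is to prove the two ideals are equal by comparing Hilbert series. Let $J_\pi$ denote the ideal generated by the antidiagonal terms of the Fulton generators, so that the inclusion $J_\pi \subseteq \init I_\pi$ is immediate. Passing to an initial ideal under any term order preserves the $\Z^n\times\Z^n$-graded Hilbert series of the quotient ring, so once I can show that $R/J_\pi$ and $R/I_\pi$ have equal Hilbert series, the reverse inclusion $\init I_\pi \subseteq J_\pi$ will follow by positivity of Hilbert functions.

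On the $I_\pi$ side, Theorem \ref{thm:Fulton1992} tells us that $I_\pi$ is the prime defining ideal of the matrix Schubert variety $\overline{X}_\pi$. Its multidegree is the double Schubert polynomial $\mathfrak{S}_\pi(x;y)$, either by a direct geometric computation using that $\overline{X}_\pi$ is the closure of a $B_-\times B_+$ orbit on the space of matrices, or by invoking Fulton's identification of the class $[\overline{X}_\pi]$ in equivariant cohomology. Our task thus reduces to computing the multidegree of $R/J_\pi$ and matching $\mathfrak{S}_\pi$.

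On the $J_\pi$ side, each Fulton generator is a subdeterminant in distinct variables, so its antidiagonal is a squarefree monomial; hence $J_\pi$ is the Stanley--Reisner ideal of a simplicial complex $\Delta_\pi$ on the entries of the ambient matrix. The multidegree of any such Stanley--Reisner ring decomposes as a positive sum of contributions, one per facet of $\Delta_\pi$, each facet contributing the single monomial that encodes the complementary variable set. I would then identify the facets of $\Delta_\pi$ bijectively with reduced pipe dreams (RC-graphs) for $\pi$, and invoke the Billey--Jockusch--Stanley formula, which expresses $\mathfrak{S}_\pi$ as exactly the sum over reduced pipe dreams of precisely these monomials.

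The main obstacle is the combinatorial identification of facets of $\Delta_\pi$ with reduced pipe dreams, and in particular showing that no extra facets appear (e.g.\ corresponding to non-reduced configurations). The approach I would take is via \emph{mitosis}: fix a reduced decomposition $\pi=s_{i_1}\cdots s_{i_\ell}$ and argue by induction on $\ell(\pi)$ that an explicit mitosis operation in a column generates the reduced pipe dreams of $\pi$ from those of $\pi s_{i_\ell}$. The inductive step translates this combinatorial move into a primary-decomposition step comparing $J_\pi$ and $J_{\pi s_{i_\ell}}$, namely writing $J_\pi$ as the intersection of the "starting" component with the components produced by mitosis. Verifying that mitosis preserves reducedness, that the resulting intersection yields no extraneous primes, and handling the base case $\pi=\id$ (where both ideals are trivially zero) together constitute the technical heart of the argument; the rest follows by Hilbert-series bookkeeping.
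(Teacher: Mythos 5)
This theorem is stated in the paper as a citation to Knutson--Miller \cite{KnutsonMiller2005}; the paper gives no proof of its own, so the only meaningful comparison is to the argument in that reference, whose outline you have essentially reproduced.

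There is, however, a genuine gap in the closing step. You begin by (correctly) noting that it would suffice to show $R/J_\pi$ and $R/I_\pi$ have equal \emph{Hilbert series} in the $\Z^n\times\Z^n$ grading: the containment $J_\pi \subseteq \init I_\pi$ gives a graded surjection $R/J_\pi \onto R/\init I_\pi$, and since $R/\init I_\pi$ has the same Hilbert series as $R/I_\pi$, equal Hilbert series would force the surjection to be an isomorphism. But you then pivot to comparing \emph{multidegrees}, which is a strictly coarser invariant --- the multidegree is only the leading form of the $K$-polynomial (numerator of the Hilbert series). Containment together with equality of multidegrees does not by itself force ideal equality: the kernel $K$ of $R/J_\pi \onto R/\init I_\pi$ merely has multidegree zero in top degree, i.e.\ $\dim K < \dim R/J_\pi$, and a priori such a lower-dimensional $K$ could be nonzero. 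To rule this out you must additionally know that $R/J_\pi$ has no associated primes of less than top dimension --- that is, that $J_\pi$ is unmixed. This is exactly where Knutson--Miller invest real work: they show the antidiagonal complex $\Delta_\pi$ is not merely pure but shellable (via the subword-complex machinery, which is where mitosis enters), hence Cohen--Macaulay, hence $J_\pi$ is unmixed; then a nonzero submodule $K \subseteq R/J_\pi$ of strictly smaller dimension is impossible and equality follows.

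So your plan correctly identifies the combinatorial heart (facets of $\Delta_\pi$ = reduced pipe dreams, proved by mitosis induction, matched against the Billey--Jockusch--Stanley formula), but as written it would not actually prove the theorem: you would establish equal multidegrees and stop, without the purity/shellability input that upgrades this to an ideal equality. You also implicitly assume purity when you assert that every facet contributes a monomial to the multidegree --- only top-dimensional facets do --- so purity must be proved, not assumed. If you instead wanted to carry out the Hilbert-series version literally, you would need the $K$-theoretic refinement (Grothendieck polynomials and all, not just reduced, pipe dreams), which is a different and heavier computation than what the Billey--Jockusch--Stanley formula supplies.
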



A \defi{fixed-point-free involution} is an element $\iota \in S_{2n}$ such that $\iota^2=\text{identity}$ and $\iota(i) \ne i$ for $1 \le i \le 2n$.  Begin with a $1 \times 2n$ array of dots which we will call \defi{outlets}.  A \defi{wiring diagram} for a fixed-point-free involution is the figure formed by connecting the $i^{th}$ and $\iota(i)^{th}$ outlets with an arc, or wire, run over the array of dots.  For example, the wiring diagram for $43217856$ is given in Figure \ref{fig:wiringDiag}.  

\begin{figure}
\begin{center}
\begin{tikzpicture}
\draw plot [smooth, tension=1] coordinates {(1,0) (2.5,1) (4,0)};
\draw plot [smooth, tension=1] coordinates {(2,0) (2.5,.5) (3,0)};
\draw plot [smooth, tension=1] coordinates {(5,0) (6,.5) (7,0)};
\draw plot [smooth, tension=1] coordinates {(6,0) (7,.5) (8,0)};
\end{tikzpicture}
\end{center}
\caption{The wiring diagram for $43217856$.}
\label{fig:wiringDiag}
\end{figure}
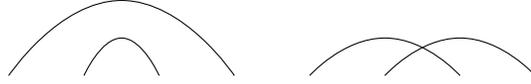

The fixed point free involution $2143 \cdots (2n)(2n-1)$ (written as a permutation in one line notation) will be particularly important and we will denote it $\overline{J}_n$.  The privileged position of $\overline{J}_n$ stems both from the fact that it is the unique shortest fixed point free involution and from the fact that it is the combinatorial shadow of the symplectic form $J$.

Our key tool to begin understanding the $Sp_n$-orbit closures on $\fl(\C^{2n})$ is:
\begin{theorem}[\cite{RichardsonSpringer1990}]
$Sp_n$ orbits on $\fl(\C^{2n})$ correspond to fixed-point-free involutions.  
\end{theorem}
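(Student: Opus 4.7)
The plan is to reduce the orbit classification to a problem about antisymmetric bilinear forms, then bring each form to a canonical signed-permutation representative by symmetric Gaussian elimination. Since $\fl(\C^{2n}) = B \backslash GL_{2n}\C$, the $Sp_n$-orbits on $\fl(\C^{2n})$ are in bijection with the orbits of the left-$B$, right-$Sp_n$ action on $GL_{2n}\C$ given by $(b,k)\cdot g = b g k^{-1}$, which matches the framework the paper has already set up.

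The key device I would use is the map $\phi \colon GL_{2n}\C \to M^\circ$ defined by $\phi(g) = g J g^T$, where $M^\circ$ denotes the variety of nondegenerate $2n \times 2n$ antisymmetric matrices. A short calculation using $kJk^T = J$ for $k \in Sp_n$ yields $\phi(bgk^{-1}) = b\,\phi(g)\,b^T$, so $\phi$ is $Sp_n$-invariant and $B$-equivariant when $B$ acts on $M^\circ$ by $A \mapsto bAb^T$. Since every nondegenerate antisymmetric form on $\C^{2n}$ is equivalent to $J$, the map $\phi$ is surjective; and $gJg^T = g'Jg'^T$ forces $g^{-1}g' \in Sp_n$, so the fibers are exactly the right $Sp_n$-cosets. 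Hence $\phi$ descends to a $B$-equivariant bijection $GL_{2n}\C / Sp_n \xrightarrow{\;\sim\;} M^\circ$, and it remains to classify $B$-orbits on $M^\circ$ under $A \mapsto bAb^T$.

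To each fixed-point-free involution $\iota \in S_{2n}$ I would attach the signed permutation matrix $M_\iota$ with $(M_\iota)_{i,\iota(i)} = +1$ when $i < \iota(i)$ and $-1$ when $i > \iota(i)$; each $M_\iota$ lies in $M^\circ$. The central claim is that every $B$-orbit on $M^\circ$ contains exactly one such $M_\iota$. Existence is by symmetric Gaussian elimination: given $A \in M^\circ$, locate the leftmost nonzero entry of row $1$ at column $j_1$, rescale it to $1$, and use conjugations $A \mapsto bAb^T$ with $b$ of the form $I + c E_{k,1}$ or $I + c E_{k,j_1}$ (both lower triangular since $k > 1$ or $k > j_1$) to clear the rest of rows and columns $1$ and $j_1$. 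This leaves a nondegenerate antisymmetric block indexed by $\{2, \ldots, 2n\} \setminus \{j_1\}$, to which the procedure is applied inductively; the sequence of pivots records a fixed-point-free involution $\iota$ with $\iota(1) = j_1$. Uniqueness follows because the northwest ranks $r_{ij}(A)$ are preserved by $A \mapsto bAb^T$ with $b \in B$, and the rank matrix of $M_\iota$, whose entries are $\#\{k \le i : \iota(k) \le j\}$, already distinguishes the $M_\iota$'s.

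The main technical obstacle is verifying that the elimination step genuinely stays inside $B$ while preserving antisymmetry and nondegeneracy on the residual submatrix. The subtlety is that clearing an off-pivot entry in row $j_1$ by conjugation with $I + cE_{k,1}$ simultaneously modifies row $k$ and column $k$; one must track that after clearing both row $1$ and row $j_1$ (with the corresponding columns), the surviving entries indexed by $\{2, \ldots, 2n\} \setminus \{j_1\}$ assemble into an antisymmetric matrix that is again nondegenerate, which amounts to the fact that $J$ restricted to an appropriate quotient of subspaces remains symplectic. Once the induction is secured, composing the bijection from $\phi$ with the bijection between $B$-orbits on $M^\circ$ and fixed-point-free involutions yields the statement.
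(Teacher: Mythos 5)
Your proof is correct and begins from the same reduction used in the paper, namely the map $g \mapsto gJg^T$ taking $B_-\times Sp_n$-orbits on $GL_{2n}\C$ to $B_-$-orbits (under the congruence action $A \mapsto bAb^T$) on nondegenerate antisymmetric matrices. Where the paper's proof is essentially a pointer to Richardson--Springer for the classification of those $B_-$-orbits by fixed-point-free involutions, you supply that step yourself via symmetric Gaussian elimination (existence of the representative $M_\iota$) together with invariance of northwest ranks (uniqueness), so your argument is a self-contained version of the same approach.
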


\emph{Proof}  See \cite{RichardsonSpringer1993} example $5.1(4)$ and \cite{RichardsonSpringer1990} Section $10$.  We map from 
\[
\{\text{orbits of }B_- \times Sp_n \text{ on } 2n \times 2n \text{ full rank matrices}\} \to  \{\text{orbits of } B_- \text{ acting by }b \cdot M= bMb^T\}
\] 
via the map on matrices $M \mapsto MJM^T$. \qed

\noindent We shall denote by $Y_\iota$ the orbit corresponding to the fixed-point-free involution $\iota$.  

\subsection{Basic Elements of the Poset of $Y_\iota$}  This section describes the result of finding a subset of the partially ordered set of orbit closures that contains the basic elements.  Endow the set of fixed point free involutions with the partial order inherited from the opposite Bruhat order on the symmetric group.  We will define a set $\mathcal{A}$ that contains the basic elements of this poset.  We begin with two definitions:  The \defi{symplectic diagram} for a fixed point free involution is formed by taking the usual Rothe diagram for the permutation $\iota$ and intersecting it with the strict upper triangle.  The \defi{symplectic essential boxes} for a fixed-point-free involution $\iota$ are  the boxes in the symplectic diagram with no symplectic diagram cells immediately south or immediately east of them.  Essentially, this the definition used in Fulton's Theorem restricted to the strict upper triangle of the Rothe diagram. The boxes in the symplectic diagram for $216543$ are shown in Figure \ref{fig:symplDiag} with the symplectic essential box marked with an $e$.  
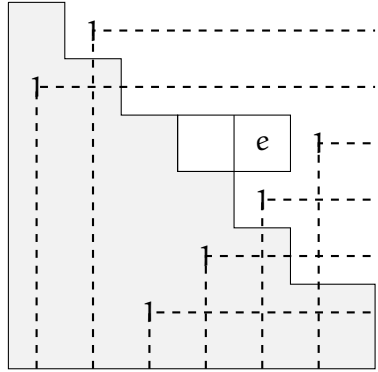
\begin{figure}
\begin{center}
\begin{tikzpicture}[x=.75cm, y=.75cm]
\draw [fill=light-gray](-.5,6.5)--(.5,6.5)--(.5,5.5)--(1.5,5.5)--(1.5,4.5)--(2.5,4.5)--(2.5,3.5)--(3.5,3.5)--(3.5,2.5)--(4.5,2.5)--(4.5,1.5)--(6,1.5)--(6,0)--(-.5,0)--(-.5,6.5);
\draw (0,5) node{$1$};
\draw [dashed, thick] (0,0) --(0,5)--(6,5);
\draw (1,6) node{$1$};
\draw [dashed, thick] (1,0) --(1,6)--(6,6);
\draw (2,1) node{$1$};
\draw [dashed, thick] (2,0) --(2,1)--(6,1);
\draw (3,2) node{$1$};
\draw [dashed, thick] (3,0) --(3,2)--(6,2);
\draw (4,3) node{$1$};
\draw [dashed, thick] (4,0) --(4,3)--(6,3);
\draw (5,4) node{$1$};
\draw [dashed, thick] (5,0) --(5,4)--(6,4);
\draw (2.5,3.5)--(4.5,3.5)--(4.5,4.5)--(2.5,4.5)--(2.5,3.5);
\draw (4,4) node{$e$};
\draw (3.5,3.5)--(3.5,4.5);
\end{tikzpicture}
\end{center}
\caption{The symplectic diagram for $216543$ is the unshaded portion of the diagram.}
\label{fig:symplDiag}
\end{figure}

Define $\mathcal{A}=\mathcal{A}_e \cup \mathcal{A}_o$ to be a subset of the fixed-point-free involutions such that $\{Y_\iota:\iota \in \mathcal{A}\}$ contains the basic elements for the poset of $\{Y_\iota\}$.  $\mathcal{A}_e$ is the set of all fixed-point-free involutions with exactly one symplectic essential box which corresponds to an even rank condition $2r$.  $\mathcal{A}_o$ is the set of all fixed-point-free involutions with exactly two symplectic essential boxes one at $(p,p+1)$ with even rank condition $2r$ and one in row $p+1$ with odd rank condition $2r+1$.  

\begin{theorem}\label{thm:basicElts}
$\mathcal{A}$ contains the basic elements of the set of fixed point free involutions.  Hence $\mathcal{A}$ gives a corresponding subset of the poset of orbit closures of $Sp_n$ on $\fl(\C^{2n})$ that contains the basic elements.  \end{theorem}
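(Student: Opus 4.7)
The aim is to show that every meet-irreducible element of the poset of fpf involutions in $S_{2n}$ (ordered by opposite Bruhat) belongs to $\mathcal{A}$; equivalently, for any $\iota \notin \mathcal{A}$ the orbit closure $Y_\iota$ is the intersection of strictly larger orbit closures. I would proceed in three stages.

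First, translate the question into one about rank conditions on antisymmetric matrices. Via the map $M \mapsto MJM^T$ used in the proof of the Richardson--Springer theorem above, $Y_\iota$ corresponds to a $B_-$-congruence orbit closure on the space of $2n \times 2n$ antisymmetric matrices, cut out by rank conditions on northwest submatrices of $MJM^T$. Because $\iota$ is an involution its rank matrix satisfies $r_{ij}(\iota) = r_{ji}(\iota)$, and because $MJM^T$ is antisymmetric the diagonal values $r_{ii}(\iota)$ are forced to be even. Consequently, the only non-redundant defining rank conditions sit at the symplectic essential boxes of $\iota$, in direct analogy with Fulton's Theorem \ref{thm:Fulton1992}.

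Second, for each symplectic essential box $(p, q)$ of $\iota$ carrying rank condition $r$, I would construct an explicit fpf involution $\iota_{(p,q)}$ whose symplectic diagram imposes precisely this rank condition (together with any companion rank condition forced by antisymmetry of $MJM^T$), verify that $Y_{\iota_{(p,q)}} \supseteq Y_\iota$, and prove the intersection formula
\[
Y_\iota \;=\; \bigcap_{(p,q)} Y_{\iota_{(p,q)}},
\]
where $(p, q)$ ranges over the symplectic essential boxes of $\iota$. Because intersecting orbit closures corresponds to summing their defining ideals, and the union of the essential rank conditions of the $\iota_{(p,q)}$ recovers the full set for $\iota$, this reduces to the symplectic analog of Fulton's theorem.

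Third, a parity analysis forces the two flavors of elements in $\mathcal{A}$. When $r = 2s$ is even, one may arrange $\iota_{(p,q)}$ to have a single symplectic essential box, placing it in $\mathcal{A}_e$. When $r = 2s+1$ is odd at position $(p+1, q)$ with $q > p+1$, the antisymmetry of the northwest $(p+1) \times (p+1)$ block of $MJM^T$ forces its rank to be at most $2s$ (it must be even), and by the rank-matrix symmetry this produces a second symplectic essential box at $(p, p+1)$ with even rank condition $2s$; hence $\iota_{(p,q)} \in \mathcal{A}_o$. The main obstacle I anticipate is the explicit combinatorial construction of the involutions $\iota_{(p,q)}$: given a single rank condition (together with its parity-forced companion when applicable), one must exhibit an actual fpf involution on $\{1, \ldots, 2n\}$ whose symplectic diagram has exactly the prescribed essential boxes. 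A secondary technical step is rigorously justifying the ideal-theoretic intersection formula, which amounts to establishing the symplectic analog of Fulton's theorem for the defining ideals of the $Y_\iota$.
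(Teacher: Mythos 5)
Your three-stage plan matches the paper's strategy in outline: for each symplectic essential box of $\iota$ you would produce an element of $\mathcal{A}$ imposing just that rank condition (plus its parity-forced companion), and then recover $\iota$ as the greatest lower bound. Your parity analysis in the third stage is exactly the paper's Lemma \ref{lemma:OddRanks} (an odd rank condition $2k+1$ at $(i,j)$ forces rank $\le 2k$ at $(i-1,i)$, whence the two-box shape of elements of $\mathcal{A}_o$), and you correctly flag the explicit construction of the $\iota_{(p,q)}$ as the main technical content; the paper carries this out via the families $\iota_e$, $\iota_e'$, $\iota_o$, $\iota_o'$, with separate sub-cases depending on the parity of $i-j$.

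The genuine divergence is in your second stage, where you propose to prove the scheme-theoretic intersection formula $Y_\iota = \bigcap_{(p,q)} Y_{\iota_{(p,q)}}$ by summing defining ideals, which you correctly observe would amount to a symplectic analog of Fulton's theorem. The paper deliberately avoids this: as Section \ref{sec:data} makes clear, explicit generators (let alone a Fulton-style minimal generating set) for $I(Y_\iota)$ are not known, so that route is currently blocked. Instead the paper argues purely combinatorially in the poset of fixed-point-free involutions under opposite Bruhat order, using that $\iota \le \iota'$ whenever $\iota'$ relaxes some of $\iota$'s essential rank conditions, and that the conjunction of the imposed conditions pins $\iota$ down exactly. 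The statement about orbit closures then follows formally from the Richardson--Springer/Ignatyev identification of that poset with the containment poset of $Sp_n$-orbit closures. So the combinatorial argument suffices and is strictly easier than what you propose; if you pursue the ideal-theoretic version, be aware you would be proving something substantially stronger than Theorem \ref{thm:basicElts} and would need new input beyond the paper's toolkit.
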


Despite being considerably smaller than the entire set of fixed point free involutions, $\mathcal{A}$ contains more than just the basic elements.  In the case of $2n=6$, for example, $351624 \in \mathcal{A}$, but it is the greatest lower bound of $341265$ and $215634$.    

\subsection{Degenerating the $Y_\iota$}

This section states the result of Gr\"obner degenerating an orbit closure.  We will show that, for a suitable term order, an orbit closure $Y_\iota$ Gr\"obner degenerates to the union of the Schubert varieties for ``pair permutations" for $\iota$.  A \defi{pair permutation}  for a fixed-point-free involution $\iota$ is a permutation found by:
\begin{itemize}
\item Write the wiring diagram for $\iota$ using outlets across the top of an array 
\item Write the wiring diagram for $\overline{J}_n$ using outlets along the left side of the array 
\item Connect the half loops formed by the wiring diagrams for $\iota$ and $\overline{J}_n$ into circles with more wires, so as the minimize the number of potential wire crossings
\item Read the pair permutation by treating the newly added wires as a wiring diagram for a permutation reading from top to left.  If a single wire goes from the $i^{th}$ from left outlet on the top to the $j^{th}$ from the top outlet on the left then the pair permutation will take $i$ to $j$.  
\end{itemize}
Denote the set of all possible pair permutations for $\iota$ by $P(\iota)$.  For example, $P(4321)=\{1342, 3124\}$.  The diagrams used in the construction of $P(4321)$ are shown in Figure \ref{fig:pairPerms}.  

The correct term order is given by weighting the variables by weighting the variables in column $j$ of the matrix of variables by $t^{\lceil j/2\rceil-1}$.  The initial form of an equation is the result of taking the limit as $t$ goes to $0$ of each term of the equation times its total weight.  This is a relaxation of the notion of a term order in that it allows ties in the order on monomials.  We shall thus consider the initial form of a polynomial as the sum of the largest terms in the polynomial.

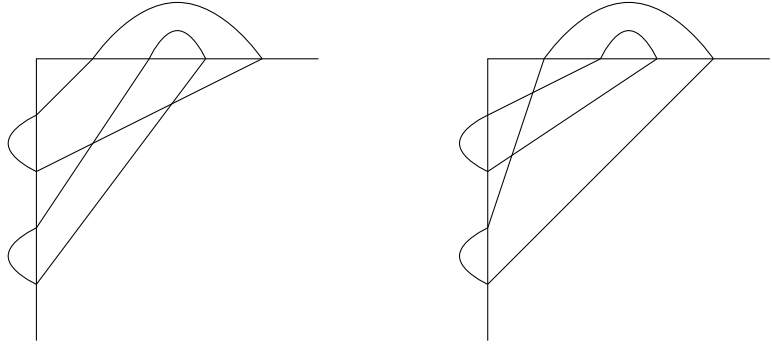
\begin{figure}
\begin{tikzpicture}[x=.75cm, y=.75cm]
\draw plot [smooth, tension=1] coordinates {(2,5) (3.5,6) (5,5)};
\draw plot [smooth, tension=1] coordinates {(1,3) (.5,3.5) (1,4)}; 
\draw plot [smooth, tension=1] coordinates {(3,5) (3.5,5.5) (4,5)};
\draw plot [smooth, tension=1] coordinates {(1,1) (.5,1.5) (1,2)};
\draw (1,4)--(2,5);
\draw (5,5) --(1,3);
\draw (1,2)--(3,5);
\draw (1,1)--(4,5);
\draw (1,0)--(1,5)--(6,5);
\draw plot [smooth, tension=1] coordinates {(10,5) (11.5,6) (13,5)};
\draw plot [smooth, tension=1] coordinates {(9,3) (8.5,3.5) (9,4)}; 
\draw plot [smooth, tension=1] coordinates {(11,5) (11.5,5.5) (12,5)};
\draw plot [smooth, tension=1] coordinates {(9,1) (8.5,1.5) (9,2)};
\draw (9,4)--(11,5);
\draw (12,5) --(9,3);
\draw (9,2)--(10,5);
\draw (9,1)--(13,5);
\draw (9,0)--(9,5)--(14,5);
\end{tikzpicture}
\caption{The pair permutations for $4321$ are $1342$ and $3124$.}
\label{fig:pairPerms}
\end{figure}

\begin{theorem}\label{thm: whichSchuberts}
$Y_\iota$ degenerates to $\cup_{\pi \in P(\iota)} X_\pi$ under the relaxed notion of a term order described above.  That is, for the weighting of variables given above, 
\[
\init I(Y_\iota)=\init\cap_{\pi \in P(\iota)}I_\pi.
\]
\end{theorem}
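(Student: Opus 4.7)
My approach proceeds in three stages: find defining equations for $Y_\iota$, compute their initial forms, and identify the resulting ideal with the intersection on the right via Frobenius splitting.

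For Stage 1, use the Richardson--Springer map $M \mapsto N := MJM^T$, which identifies $B_- \times Sp_n$-orbits on $GL_{2n}$ with $B_-$-orbits on invertible skew-symmetric matrices under $N \mapsto bNb^T$. Orbit closures in the skew-symmetric model are cut out by rank and Pfaffian conditions on northwest submatrices of $N$, which pull back to polynomial equations in the $x_{ij}$. Since $J$ pairs columns $2p-1, 2p$, each entry $N_{ik} = \sum_{p=1}^{n} [M|_{i,k;\,2p-1,\,2p}]$ is a sum of paired-column $2 \times 2$ minors of $M$, and larger northwest minors of $N$ expand as polynomials in the $2k \times 2k$ paired-column minors of $M$.

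For Stage 2, exploit that the weighting assigns both columns of each pair $(2p-1, 2p)$ the common weight $t^{p-1}$, so a paired-column $2k \times 2k$ minor of $M$ on pairs $(2p_1-1, 2p_1), \ldots, (2p_k-1, 2p_k)$ carries weight $2\sum_\ell (p_\ell - 1)$, whereas mixed minors (not respecting the pairing) receive generically distinct weights. Taking initial forms of the pulled-back rank-condition equations should produce sums of paired minors whose antidiagonal monomials are supported exactly on the Rothe diagrams of pair permutations $\pi \in P(\iota)$. Combined with compatible Frobenius splitting of matrix Schubert varieties (Knutson, Knutson--Miller), which gives $\init \bigcap_\pi I_\pi = \bigcap_\pi \init I_\pi$, this yields
\[
\init I(Y_\iota) \;\subseteq\; \bigcap_{\pi \in P(\iota)} \init I_\pi \;=\; \init \Bigl(\bigcap_{\pi \in P(\iota)} I_\pi\Bigr).
\]

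For Stage 3, invoke Frobenius splitting of $Y_\iota$ itself (He--Thomsen): because the one-parameter subgroup giving our weight is $B_-$-compatible with the splitting, the limit ideal $\init I(Y_\iota)$ is radical and $B_-$-invariant, so the degeneration is a reduced union of Schubert varieties and it remains to identify the irreducible components. The wiring-diagram construction of $P(\iota)$ --- overlaying the diagram of $\iota$ on top with that of $\overline{J}_n$ on the left, then pairing arcs to minimize crossings --- is precisely the combinatorial record of which antidiagonal matchings survive the limit $t \to 0$ in Stage 2, so a dimension count matching $\ell(\pi)$ with the codimension of $Y_\iota$ should identify the components as exactly $\{X_\pi : \pi \in P(\iota)\}$ and give the reverse containment.

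The main obstacle is this final combinatorial identification: ruling out spurious Schubert components of the same length and matching the wiring-diagram pairing precisely to the surviving antidiagonal monomials. I expect this will require a careful direct translation between the wiring-diagram overlay and the explicit antidiagonal monomials that survive in Stage 2, possibly by induction on the number of wire pairs or on the Bruhat length of $\iota$. Notably, this strategy produces $\init I(Y_\iota)$ without ever exhibiting a Gröbner basis for $I(Y_\iota)$, consistent with the paper's remark that the Gröbner basis itself is out of reach.
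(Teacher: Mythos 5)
Your three-stage plan is a genuinely different route from the paper's, and the difference is not cosmetic: you propose to work with explicit equations (pull back rank conditions through $M \mapsto MJM^T$, compute initial forms, identify surviving antidiagonal monomials), whereas the paper works entirely geometrically and never touches the defining equations of $Y_\iota$. The paper cites \cite{Brion} to get that the degeneration is a reduced union of Schubert varieties, then identifies \emph{which} Schubert varieties appear by intersecting $B_-\dom Y_\iota$ with opposite Schubert cells $X^w$; by Borel's fixed point theorem this intersection is nonempty iff it contains a $T_{Sp_n}$-fixed point, and a short fixed-point computation reduces the question to finding minimal-length $w$ with $w\overline{J}_n w^{-1}=\iota$, which Lemma \ref{lemma:PermsRight} identifies as the pair permutations. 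The transversality Lemma \ref{lemma:SchemesRight} (an explicit Lie-algebra computation that $\b_-\cap\spn\subseteq w\b_- w^{-1}$) and the dimension count Lemma \ref{lemma:dimsRight} then pin the components down exactly. Nothing in the paper's proof requires knowing a generating set, let alone a Gr\"obner basis, for $I(Y_\iota)$ --- which is precisely why the paper can prove the theorem while the equations remain unknown.

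That observation also names the gap in your proposal. Your Stage 2 --- determining exactly which antidiagonal monomials survive the limit $t\to 0$ in the pulled-back rank conditions, and matching them against the Rothe diagrams of pair permutations --- is what you yourself flag as ``the main obstacle,'' and you offer no concrete mechanism for it. But this is not merely the hardest step; it is the step the paper explicitly reports as computationally infeasible even in small examples (Section \ref{sec:data}). Two further difficulties compound this. First, the pulled-back Fulton generators of $MJM^T$ cut out a scheme whose radical you would have to control to even say you are computing $\init I(Y_\iota)$ rather than the initial ideal of some non-reduced sub-ideal; your plan does not address this. Second, the containment $\init I(Y_\iota)\subseteq\bigcap_\pi\init I_\pi$ that you want from Stage 2 would follow only if \emph{every} generator's initial form lies in every $\init I_\pi$, which again presupposes you have a full generating set in hand. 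The torus-fixed-point argument is what lets the paper sidestep all of this: it converts the identification of components into a finite, purely combinatorial question about permutations, settled by Lemma \ref{lemma:PermsRight}. If you want to salvage your route, the missing idea is precisely that substitution --- replace the antidiagonal-monomial bookkeeping by an equivariant fixed-point and transversality argument, or supply an actual combinatorial proof of the Stage 2 matching, which would itself be a substantial new result.
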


\subsection{Structure of Paper}  We begin with a section of background on fixed point free involutions and their specific combinatorics relating to the action of $Sp_n$ on $\fl(\C^{2n})$.  This is followed by a section containing the proof of Theorem \ref{thm:basicElts} and then a section containing the proof of Theorem \ref{thm: whichSchuberts}.  We conclude with a section of data on progress toward finding generators of $I(Y_\iota)$ using Theorems \ref{thm:basicElts} and \ref{thm: whichSchuberts}.  

\subsection{Acknowledgements}This project constitutes part of my PhD thesis completed at Cornell University.  I'd like to thank my advisor, Allen Knutson, for his help and support.  I'd also like to thank Eric Katz, Kevin Purbhoo and Kaisa Taipale for their very helpful comments on drafts of this paper.  

\section{Background on Fixed Point Free Involutions}

In the order on fixed point free involutions inherited from  the opposite Bruhat order on $S_n$, the covering relation is given by \emph{conjugating by} any transposition resulting in a decrease in permutation length by $2$.  

\begin{proposition}[\cite{Incitti}]
The covering relations in the Bruhat order on fixed-point-free involutions (thought of as wiring diagrams) can be described by switching the plugs in two outlets such that the length of the new involution increases by $2$.  
\end{proposition}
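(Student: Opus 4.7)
The plan is to translate the algebraic description of the cover relation into the pictorial language of wiring diagrams. The sentence preceding the proposition already tells us that covers in the opposite Bruhat order on FPF involutions are precisely conjugations $\iota \mapsto t_{ij}\iota t_{ij}$ for which the permutation length changes by $2$, so it suffices to recognize such a conjugation as ``switching the plugs at outlets $i$ and $j$.''

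The main step is a direct computation. Setting $a = \iota(i)$ and $b = \iota(j)$ with $\{i,j\}$ not already a $\iota$-pair, one checks that $t_{ij}\iota t_{ij}$ re-pairs $i \leftrightarrow b$ and $j \leftrightarrow a$ while leaving all other pairs of $\iota$ alone. In the wiring diagram this replaces the arcs $i \leftrightarrow a$ and $j \leftrightarrow b$ by $i \leftrightarrow b$ and $j \leftrightarrow a$, which is exactly what ``switching plugs'' at outlets $i$ and $j$ means. The degenerate case $\{i,j\} = \{\iota(i),\iota(j)\}$ gives $t_{ij}\iota t_{ij} = \iota$, matching the trivial plug switch at two outlets already joined by a single arc. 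I would also note that in the non-degenerate case the new pairs partition $\{i,j,a,b\}$ into $\{i,b\}$ and $\{j,a\}$, so no fixed point is created and the operation stays inside the FPF involutions. The length condition is inherited from the preceding statement; the sign change from ``decrease'' to ``increase'' is the usual convention of describing a cover by moving from the smaller element to the larger in the opposite Bruhat order.

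The main potential obstacle is not the pictorial translation itself but the length bookkeeping behind it: one has to know that the number of length-$2$ conjugate involutions of $\iota$ matches the number of Bruhat covers of $\iota$ among FPF involutions. This is exactly the content of Incitti's classification of covers on involutions, restricted to the FPF subposet, and the restriction is benign precisely because the FPF property is preserved by any plug switch, as just observed.
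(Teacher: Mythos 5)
The paper offers no proof of this proposition; it is stated as a cited result of Incitti, with the preceding sentence already phrasing the cover relation as conjugation by a transposition that changes the length by $2$. Your translation from conjugation to wiring is correct: conjugating a fixed-point-free involution $\iota$ by $t_{ij}$ re-pairs $i$ with $\iota(j)$ and $j$ with $\iota(i)$ while leaving every other pair intact, which is exactly switching the plugs at outlets $i$ and $j$, and the result is again fixed-point-free. The substantive content --- that these length-$2$ conjugations are exactly the covers among fixed-point-free involutions --- you attribute to Incitti, which is also what the paper does, so in that respect your account matches the paper's.

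One step should be tightened. You justify restricting Incitti's classification to the fixed-point-free subposet by saying the restriction is ``benign precisely because the FPF property is preserved by any plug switch.'' Closure of the subset under the operation does not by itself imply that covers in the induced subposet coincide with ambient covers that happen to land back in the subset: a pair of fixed-point-free involutions $\iota' < \iota$ could, a priori, be a cover in the subposet while some non-fixed-point-free involution sits strictly between them, or conversely. What actually makes this work here is a parity observation (the length of a fixed-point-free involution of $\{1,\dots,2n\}$ always has the parity of $n$, so no fixed-point-free involution fits inside a length-$2$ gap), giving one direction by a short Bruhat chain argument, while the other direction --- that every subposet cover is realized by such a conjugation --- is genuinely part of what the citation to Incitti is carrying. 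Spelling out that division would turn the implicit delegation into a correct one.
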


%
For example, if we switch the $1$ and the $4$ in $214365$ we must also switch the $2$ and $3$ in order to still have a fixed point free involution; this corresponds to conjugating by $t_{23}$.  This is the same as switching the plugs corresponding to the ends of the wire connecting $1$ and $2$ and the wire connecting $3$ and $4$.  In the opposite Bruhat order on fixed point free involutions order, $341265$ is covered by $214365$.  The wiring diagrams for $341265$ and $214365$ are shown in Figure \ref{fig:wiringCovers}.  We can, of course, switch wires in non-adjacent outlets $i$ and $j$ which correspond to conjugating by $t_{ij}$.  If conjugating by $t_{ij}$ changes the length of a transposition by $2$ it still corresponds to a cover in this order, as it is the order inherited from the strong Bruhat order on $S_n$.

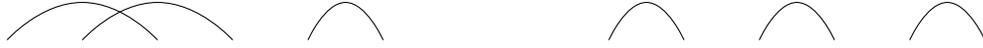
\begin{figure}
\begin{center}
\begin{tikzpicture}
\draw plot [smooth, tension=1] coordinates {(1,0) (2,.5) (3,0)};
\draw plot [smooth, tension=1] coordinates {(2,0) (3,.5) (4,0)};
\draw plot [smooth, tension=1] coordinates {(5,0) (5.5,.5) (6,0)};
\draw plot [smooth, tension=1] coordinates {(9,0) (9.5,.5) (10,0)};
\draw plot [smooth, tension=1] coordinates {(11,0) (11.5,.5) (12,0)};
\draw plot [smooth, tension=1] coordinates {(13,0) (13.5,.5) (14,0)};
\end{tikzpicture}
\end{center}
\caption{$341265$ is covered by $214365$.}
\label{fig:wiringCovers}
\end{figure}

Let $\iota$ be an involution of $1, \ldots, 2k$ and $\iota'$ be an involution of $1, \ldots, 2m$.  We shall define the involution $\iota \oplus \iota'$ of $1, \ldots , 2k+2m$ by
\begin{displaymath}
   (\iota \oplus \iota')(i) = \left\{
     \begin{array}{lr}
       \iota(i) & i \in \{1, \ldots, 2k\}\\
       \iota'(i-2k)+2k &  i \in \{2k+1, \ldots, 2k+2m\}
     \end{array}
   \right.
\end{displaymath}
Taking the direct sum of $\iota$ and $\iota'$ is equivalent to drawing the wiring diagrams for $\iota$ and $\iota'$ next to each other and considering the result as the wiring diagram for one fixed point free involution.  We will say that $\iota$ is a \defi{countryside involution} if $\iota$ is of the form $\overline{J}_{k} \oplus 3412 \oplus \overline{J}_{n-k-2}$ and that $\iota$ is a \defi{rainbow involution} if $\iota$ is of the form $\overline{J}_{k} \oplus 4321 \oplus \overline{J}_{n-k-2}$.  
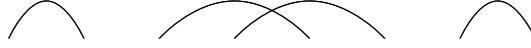
\begin{figure}
\begin{center}
\begin{tikzpicture}
\draw plot [smooth, tension=1] coordinates {(1,0) (1.5,.5) (2,0)};
\draw plot [smooth, tension=1] coordinates {(3,0) (4,.5) (5,0)};
\draw plot [smooth, tension=1] coordinates {(4,0) (5,.5) (6,0)};
\draw plot [smooth, tension=1] coordinates {(7,0) (7.5,.5) (8,0)};
\draw plot [smooth, tension=1] coordinates {(1,0) (1.5,.5) (2,0)};
\draw plot [smooth, tension=1] coordinates {(3,0) (4,.5) (5,0)};
\draw plot [smooth, tension=1] coordinates {(4,0) (5,.5) (6,0)};
\draw plot [smooth, tension=1] coordinates {(7,0) (7.5,.5) (8,0)};
\end{tikzpicture}
\end{center}
\caption{$21563487$ is a countryside involution.}
\label{fig:countryside}
\end{figure}
\begin{figure}
\begin{center}
\begin{tikzpicture}
\draw plot [smooth, tension=1] coordinates {(1,0) (1.5,.5) (2,0)};
\draw plot [smooth, tension=1] coordinates {(3,0) (3.5,.5) (4,0)};
\draw plot [smooth, tension=1] coordinates {(5,0) (6.5,1) (8,0)};
\draw plot [smooth, tension=1] coordinates {(6,0) (6.5,.5) (7,0)};
\draw plot [smooth, tension=1] coordinates {(9,0) (9.5,.5) (10,0)};
\end{tikzpicture}
\end{center}
\caption{$21438765109$ is a rainbow involution.}
\label{fig:rainbow}
\end{figure}
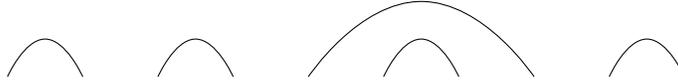
We will say that one involution $\iota$ \defi{contains} another involution $\iota'$ if by removing enough wires from the wiring diagram of $\iota$ we are left with the wiring diagram for $\iota'$.  For example, $532614$ contains both a rainbow involution and a countryside involution as shown in Figure \ref{fig:containsEx}.  

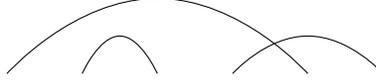
\begin{figure}
\begin{center}
\begin{tikzpicture}
\draw plot [smooth, tension=1] coordinates {(1,0) (3,1) (5,0)};
\draw plot [smooth, tension=1] coordinates {(2,0) (2.5,.5) (3,0)};
\draw plot [smooth, tension=1] coordinates {(4,0) (5,.5) (6,0)};
\end{tikzpicture}
\end{center}
\caption{$532614$ contains both a rainbow involution and a countryside involution. }
\label{fig:containsEx}
\end{figure}

%
%
%
%

Notice that we can write an involution by writing a history of outlet switches corresponding to switching the wires plugged into a pair of outlets.  This is the analog of a reduced word in $S_n$, and is similarly not unique.  In fact, switching wires in outlets $i$ and $i+1$ corresponds to conjugating by the simple reflection $s_i$.  This leads us to a formula for the length of a fixed point free involution in terms of the structure of its wiring diagram:

\begin{lemma}
Let $c$ be the number of countryside involutions contained in $\iota$ and $r$ be the number of rainbow involutions contained in $\iota$.  Then the permutation length of a fixed-point-free involution $\iota$ of $1, \ldots, 2n$ is given by $n+2c+4r$.  
\end{lemma}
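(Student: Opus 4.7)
The plan is to compute the permutation length $\ell(\iota)$ directly as the number of inversions, decomposing inversions by which arcs of the wiring diagram the two positions belong to. First, I would observe that because $\iota$ is a fixed-point-free involution, the $2n$ outlets are partitioned into $n$ arcs $\{a,b\}$ with $a<b$, and each ordered pair $i<j$ falls into one of two situations: either $i$ and $j$ lie on the same arc, or they lie on distinct arcs.

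For a single arc $\{a,b\}$ with $a<b$, positions $a$ and $b$ carry the values $b$ and $a$ respectively, so $(a,b)$ contributes exactly one inversion. Summing over the $n$ arcs gives the $n$ term of the formula. The core of the argument is then to handle pairs of distinct arcs. Given two arcs $\{a,b\}$ and $\{c,d\}$ with $a<b$ and $c<d$ and (WLOG) $a<c$, there are precisely three configurations: disjoint ($a<b<c<d$), crossing ($a<c<b<d$), and nested ($a<c<d<b$). I will verify by direct case analysis of the four position pairs $(a,c), (a,d), (b,c), (b,d)$ (more precisely, the four pairs with one entry from each arc, ordered so the first is smaller) that the number of inversions contributed is $0$, $2$, and $4$ in these three cases respectively.

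The next step is to reconcile these three arc-pair types with the notions of ``countryside'' and ``rainbow'' sub-involutions introduced in the preceding paragraphs. A pair of arcs that cross is exactly a copy of $3412$ sitting inside $\iota$ (all other arcs in a countryside involution are forced by the $\overline{J}$ blocks), and so the number of countryside involutions contained in $\iota$ equals the number of crossing pairs of arcs; likewise, a nested pair of arcs is a copy of $4321$, matching the rainbow involutions. Hence if $c$ counts crossings and $r$ counts nestings of arcs, summing contributions gives
\[
\ell(\iota) \;=\; n + 0\cdot d + 2c + 4r \;=\; n + 2c + 4r,
\]
where $d$ counts the disjoint pairs of arcs (contributing nothing). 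The routine but essential step is the case-by-case inversion count for two arcs, and the only point requiring care is confirming that ``$\iota$ contains a countryside/rainbow involution'' really is in bijection with choosing an ordered pair of crossing/nested arcs rather than something more elaborate — a sanity check against the worked example $532614$ (length $9 = 3+2(1)+4(1)$, with one crossing and one nesting) makes this unambiguous.
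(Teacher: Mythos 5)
Your proof is correct, and it takes a genuinely different route from the paper's. The paper proceeds by induction on permutation length: starting from the base case $\overline{J}_n$ of length $n$, it finds an adjacent transposition $s_i$ whose conjugation action shortens $\iota$, and then tracks how that single wire-swap changes exactly one arc-pair either from crossing to disjoint (dropping $c$ by one, reducing $n+2c+4r$ by $2$) or from nested to crossing (trading one rainbow for one countryside, again reducing the formula by $2$). Your approach instead computes $\ell(\iota)$ directly as an inversion count, partitioning inversions by whether the two positions lie on a single arc (exactly one inversion per arc, giving the $n$) or on two distinct arcs (giving $0$, $2$, or $4$ for disjoint, crossing, and nested arcs respectively — your casework here is right). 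The paper's inductive argument dovetails naturally with the Corollary that follows it, which characterizes covering relations by exactly the same local wire-swap move; your direct count is more self-contained and makes the decomposition of $\ell(\iota)$ by arc-pairs fully explicit rather than reading it off a telescoping sum of length-2 drops. You were right to flag the interpretation of ``$\iota$ contains a countryside/rainbow involution'' as the delicate point: taken at face value, the paper's definition (remove wires, relabel, match the full pattern $\overline{J}_k \oplus 3412 \oplus \overline{J}_{n-k-2}$) would overcount, but the paper's own inductive step — which tracks only the two wires being swapped — and the corollary make clear that the intended meaning is precisely ``crossing pair of arcs'' and ``nested pair of arcs,'' which is what your counting uses.
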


\begin{proof}
Proceed by induction on the $l(\iota)$.  For the base case examine the fixed-point-free involution $\overline{J}_n$, which has a reduced word given by $s_1 s_3 \cdots s_{2n-1}$ and length $n=n+2c+4r$, since $c$ and $r$ are both $0$.  Now assume that the Lemma is true for all fixed-point-free involutions of length at most $l$ for a fixed $l \ge n$.  Fix a fixed-point-free involution $\iota$ of length $l+1$ and examine its wiring diagram.  There are at least two adjacent outlets whose wires can be interchanged to produce a shorter permutation $\iota'$.  Pick the rightmost pair of adjacent outlets at locations $i$ and $i+1$ such that $\iota(i) \ne i+1$ and $\iota(i)> \iota(i+1)$.   Conjugating $\iota$ by $s_i$, i.e. switching the wires in the $i$ and $i+1$ outlets, produces an involution of length $l$, and hence one to which our inductive hypothesis applies.  This switch also changes the two wires involved in one of two ways: it turns them from a countryside involution to the involution $\overline{J}_2$ or from a rainbow involution to a countryside involution.  Each of these changes reduces the formula $n+2c+4r$ by two.  
\end{proof} 

\begin{corollary}
The covering relations for fixed point free involutions are given by switching the wires in two outlets where exactly one pair of wires changes from a pair of side by side loops to a countryside involution or from a countryside involution to a rainbow involution and all other pairs of wires stay in the same relative orientation.  
\end{corollary}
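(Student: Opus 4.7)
The plan is to combine the length formula from the preceding Lemma with the Proposition characterizing covers as outlet-switches that raise the length by exactly $2$. Since $l(\iota) = n + 2c(\iota) + 4r(\iota)$, the problem reduces to: which outlet switches achieve $2\Delta c + 4\Delta r = +2$? Here $c$ counts the crossing (countryside) pairs of wires and $r$ counts the nested (rainbow) pairs.

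For the easier direction, I verify directly that the two moves listed give covers. If a switch moves a single pair of wires from side-by-side to crossing (and nothing else changes) then $\Delta c = +1,\ \Delta r = 0$, so $\Delta l = +2$. If a single pair changes from crossing to nested (and nothing else changes) then $\Delta c = -1,\ \Delta r = +1$, so $\Delta l = -2+4 = +2$. In both cases, the Proposition then confirms that the switch is a cover.

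For the converse, suppose the switch of outlets $p<q$ is a cover, so $\Delta l = +2$. Only the two wires $W_1 = \{p,\iota(p)\}$ and $W_2 = \{q,\iota(q)\}$ are altered, becoming $W_1' = \{p,\iota(q)\}$ and $W_2' = \{q,\iota(p)\}$. The total change $\Delta(2c+4r)$ thus decomposes as the contribution from the pair $(W_1,W_2) \to (W_1',W_2')$ plus, for each third wire $W_3$, the net change in the two pairs $(W_1,W_3),(W_2,W_3) \to (W_1',W_3),(W_2',W_3)$. I would then carry out a case analysis on the relative cyclic orderings of the four endpoints $\{p,\iota(p),q,\iota(q)\}$ (a small number of cases up to reflection), and on the possible placements of a third wire $W_3$'s endpoints relative to these four points, to show that any nonzero contribution from a third wire forces $\Delta(2c+4r) \ne 2$. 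Hence every other pair of wires keeps its configuration, the pair $(W_1,W_2)$ alone contributes the whole $+2$, and the arithmetic $2\Delta c + 4\Delta r = 2$ with a single pair changing forces exactly one of the two moves in the statement.

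The main obstacle is the third-wire analysis in the converse direction: one must show that each way a third wire can interleave with the switched wires contributes a nonzero imbalance to $\Delta(2c+4r)$, and that any configurations whose contributions could a priori cancel either do not arise from a legitimate outlet switch or still fail to yield $\Delta l = 2$. The bookkeeping is systematic but somewhat tedious, and is really the content of the corollary beyond the easy direction.
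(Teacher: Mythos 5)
Your easy direction is fine, but the converse is where things go wrong, and your deferred ``third-wire analysis'' would, if carried through, reveal a genuine problem with the statement itself. It is \emph{not} true that a third wire contributes a nonzero imbalance whenever its pair-orientations change: if $W_3$'s pair with one moved wire goes from nested to crossing (contributing $-2$ to $\Delta(2c+4r)$) while its pair with the other moved wire goes from side-by-side to crossing (contributing $+2$), the net contribution is zero even though two pair-orientations change. Concretely, take $\iota = 432165$ (wires $\{1,4\},\{2,3\},\{5,6\}$, so $c=0$, $r=1$, length $7$) and conjugate by $t_{35}$ (equivalently $t_{26}$) to get $\iota' = 456123$ (wires $\{1,4\},\{2,5\},\{3,6\}$, all three pairs crossing, $c=3$, $r=0$, length $9$). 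One checks $432165 < 456123$ in Bruhat order, and since all fixed-point-free involutions of $S_6$ have odd length, no such involution lies strictly between them; so this is a cover. Yet three pair orientations change: the moved pair goes from side-by-side to crossing, and the third wire $\{1,4\}$ goes from nesting $\{2,3\}$ to crossing $\{2,5\}$, and from being beside $\{5,6\}$ to crossing $\{3,6\}$. Moreover this cover cannot be realized by \emph{any} adjacent conjugation (the only adjacent move raising the length of $432165$ by two yields $532614$, not $456123$).

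Thus the clause ``all other pairs of wires stay in the same relative orientation'' fails for this cover, and your plan to rule out such cancellations is not salvageable as stated. The corollary is correct when restricted to switches of \emph{adjacent} outlets — which is exactly what the inductive step of the preceding Lemma uses — because then a third wire's endpoints cannot lie strictly between positions $i$ and $i+1$, so no third-wire pair can change. But as a description of all covers it needs to be qualified. The paper offers no proof of this corollary (it is presented as an immediate consequence of the Lemma), so there is nothing to compare your argument against; the useful observation is that the case analysis you outlined is precisely where the statement breaks down.
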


\section{Proof of Theorem \ref{thm:basicElts}}

In this section we prove Theorem \ref{thm:basicElts} which gives a description of a set $\mathcal{A}$ containing the basic elements in the poset of fixed point free involutions under the opposite Bruhat order.  Recall that $\mathcal{A}=\mathcal{A}_o \cup \mathcal{A}_o$, where $\mathcal{A}_e$ is the set of fixed point free involutions with one symplectic essential box with rank condition $2r$ and $\mathcal{A}_o$ is those with one symplectic essential box with rank condition $2r$ at $(p,p+1)$ and a second symplectic essential box with rank condition $2r+1$ in row $p+1$.  We begin by constructing the elements in the set $\mathcal{A}$ and then show that any other fixed point free involution can be written at the greatest lower bound of elements in $\mathcal{A}$ to prove Theorem \ref{thm:basicElts}.  

There are two cases of possible symplectic rank conditions in $\mathcal{A}$, depending on the parity of the rank condition we wish to impose, i.e. they correspond to $\mathcal{A}_e$ and $\mathcal{A}_o$.   Each of these cases has two sub-cases depending on the relative parity of the coordinates $(i,j)$ of the prospective essential box's location.  

We begin by showing how to impose one even symplectic essential rank condition, i.e. by constructing the fixed point free involutions in $\mathcal{A}_e$.  To obtain a symplectic essential rank condition $2r$ at $(i,j)$ where $i>2r$ and $j>i$ and no other symplectic essential boxes, we need two cases depending on whether $i-j$ is even or odd:  

If $i-j$ is even, the permutation $\overline{J}_r \oplus  \iota_e(i-2r,j-2r) \oplus \overline{J}_{n-r-(i+j)/2}$ has exactly one symplectic essential box at $(i,j)$ with rank condition $2r$.  $\iota_e(a,b)$ is described in Figure \ref{fig:iota_e}.  For example, $216543=\overline{J}_2\oplus\iota_e(1,2)$ has one symplectic essential box at $(3,5)$ with rank condition $2$ associated to it.  If $i-j$ is odd, the permutation $\overline{J}_r \oplus \iota_e'(i-2r,j-2r) \oplus \overline{J}_{n-r-(i+j+1)/2}$ has exactly one symplectic essential box at $(i,j)$ with rank condition $2r$.  $\iota_e'(a,b)$ is given in Figure \ref{fig:iota_e'}.  For example, $\iota_e'(5,1)=73254816$ has one symplectic essential box at $(1,6)$ with rank condition $0$ associated to it.  

\begin{figure}
\begin{center}
\begin{tikzpicture}
\draw plot [smooth, tension=1] coordinates {(1,0) (5,1) (9,0)};
\draw plot [smooth, tension=1] coordinates {(2,0) (6,1) (10,0)};
\draw plot [smooth, tension=1] coordinates {(3,0) (7,1) (11,0)};
\draw plot [smooth, tension=1] coordinates {(4,0) (4.5,.5) (5,0)};
\draw plot [smooth, tension=1] coordinates {(7,0) (7.5,.5) (8,0)};
\node [below] at (2,0) {$b$ strands};
\node at (6,0) {$\cdots$};
\node [below] at (6,0) {a copy of $\overline{J}_{a-b}$};
\end{tikzpicture}
\end{center}
\caption{$\iota_e(a,b)$}
\label{fig:iota_e}
\end{figure}
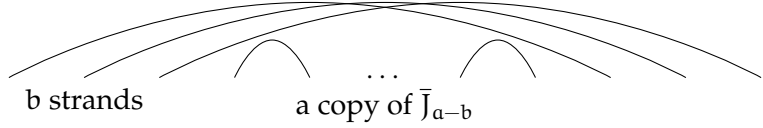

\begin{figure}
\begin{center}
\begin{tikzpicture}
\draw plot [smooth, tension=1] coordinates {(1,0) (5.5,1) (10,0)};
\draw plot [smooth, tension=1] coordinates {(2,0) (6.5,1) (11,0)};
\draw plot [smooth, tension=1] coordinates {(3,0) (7.5,1) (12,0)};
\draw plot [smooth, tension=1] coordinates {(4,0) (4.5,.5) (5,0)};
\draw plot [smooth, tension=1] coordinates {(7,0) (7.5,.5) (8,0)};
\draw plot [smooth, tension=1] coordinates {(9,0) (11,1) (13,0)};
\node [below] at (2,0) {$b$ strands};
\node at (6,0) {$\cdots$};
\node [below] at (6,0) {a copy of $\overline{J}_{a-b-1}$};
\end{tikzpicture}
\end{center}
\caption{$\iota_e'(a,b)$}
\label{fig:iota_e'}
\end{figure}
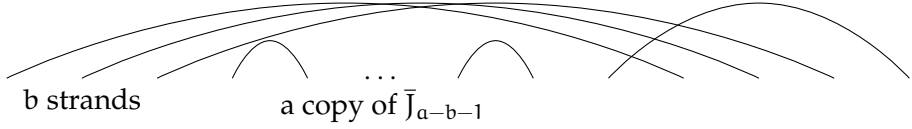

To obtain a symplectic essential rank condition of $(2r+1)$ in box $(i,j)$ we again need to do a case analysis on the parity of $i-j$:  

If $i-j$ is even use the fixed-point-free involution $\overline{J}_r \oplus \iota_o(i-2r,j-2r) \oplus \overline{J}_{r-(i+j)/2}$, where  $\iota_o(a,b)$ is given in Figure \ref{fig:iota_o}.  For example, $\iota_o(2,4)=361542$.   

If $i-j$ is odd use the fixed-point-free involution $\overline{J}_r \oplus \iota_o'(i-2r,j-2r) \oplus \overline{J}_{r-(i+j-1)/2}$, where  $\iota_o'(a,b)$ is described in Figure \ref{fig:iota_o'}.  For example, $21573846$ has one even symplectic essential box at $(3,4)$ with associated rank condition $2$ and one odd symplectic essential box at $(4,6)$ with associated rank condition $3$.  

\begin{figure}
\begin{center}
\begin{tikzpicture}
\draw plot [smooth, tension=1] coordinates {(1,0) (7,1) (13,0)};
\draw plot [smooth, tension=1] coordinates {(2,0) (8,1) (14,0)};
\draw plot [smooth, tension=1] coordinates {(3,0) (9,1) (15,0)};
\draw plot [smooth, tension=1] coordinates {(4,0) (5,.5) (6,0)};
\draw plot [smooth, tension=1] coordinates {(7,0) (7.5,.5) (8,0)};
\draw plot [smooth, tension=1] coordinates {(10,0) (10.5,.5) (11,0)};
\draw plot [smooth, tension=1] coordinates {(12,0) (14.5,1) (17,0)};
\draw plot [smooth, tension=1] coordinates {(5,0) (11.5,1) (16,0)};
\node [below] at (2,0) {$b-2$ strands};
\node at (9,0) {$\cdots$};
\node [below] at (9,0) {a copy of $\overline{J}_{a-b-1}$};
\end{tikzpicture}
\end{center}
\caption{$\iota_o(a,b)$}
\label{fig:iota_o}
\end{figure}
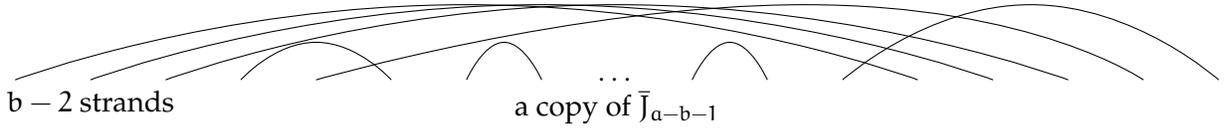

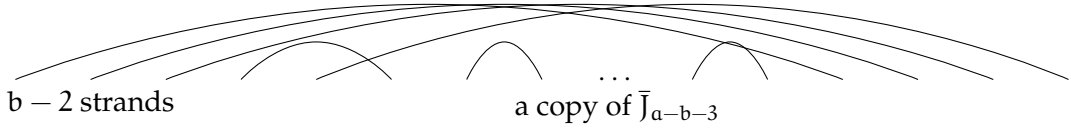
\begin{figure}
\begin{center}
\begin{tikzpicture}
\draw plot [smooth, tension=1] coordinates {(1,0) (6.5,1) (12,0)};
\draw plot [smooth, tension=1] coordinates {(2,0) (7.5,1) (13,0)};
\draw plot [smooth, tension=1] coordinates {(3,0) (8.5,1) (14,0)};
\draw plot [smooth, tension=1] coordinates {(7,0) (7.5,.5) (8,0)};
\draw plot [smooth, tension=1] coordinates {(10,0) (10.5,.5) (11,0)};
\node [below] at (2,0) {$b-2$ strands};
\node at (9,0) {$\cdots$};
\node [below] at (9,0) {a copy of $\overline{J}_{a-b-3}$};
\draw plot [smooth, tension=1] coordinates {(4,0) (5,.5) (6,0)};
\draw plot [smooth, tension=1] coordinates {(5,0) (10,1) (15,0)};
\end{tikzpicture}
\end{center}
\caption{$\iota_o'(a,b)$}
\label{fig:iota_o'}
\end{figure}

Before we prove Theorem \ref{thm:basicElts} we will need a lemma which shows that the even box symplectic essential rank condition for the elements of $\mathcal{A}_o$ is strictly necessary.  

\begin{lemma}\label{lemma:OddRanks}
If a fixed-point-free involution has a box in its symplectic diagram at location $(i,j)$ with an odd rank condition $2k+1$ then the rank condition at box $(i-1,i)$ must be at most $2k$.  
\end{lemma}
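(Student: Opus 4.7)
The plan is to translate the hypothesis that $(i,j)$ lies in the symplectic diagram into concrete arithmetic constraints on $r_{ij}(\iota)$ and $r_{i-1,i}(\iota)$, then exploit the fact that $\iota$ is a fixed-point-free involution to enforce a parity constraint that sharpens a naive monotonicity bound by one.

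First I would use the standard description of Rothe diagram cells: $(i,j) \in D(\iota)$ precisely when $\iota(i) > j$ and $\iota^{-1}(j) > i$. Combined with $(i,j)$ being in the symplectic diagram, which adds $j > i$, this gives in particular $\iota(i) > j > i$. Because $\iota(i) > j$, the index $k=i$ contributes nothing to $r_{ij}(\iota) = \#\{k \le i : \iota(k) \le j\}$, so $r_{ij}(\iota) = r_{i-1,j}(\iota)$. Since $j \ge i$, we have $r_{i-1,j}(\iota) \ge r_{i-1,i}(\iota)$, and therefore $r_{i-1,i}(\iota) \le r_{ij}(\iota) = 2k+1$.

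The second half is to show that $r_{i-1,i}(\iota)$ is actually even, which upgrades the weak inequality to $r_{i-1,i}(\iota) \le 2k$. For this I would invoke the involution structure: the set $\{k \le i-1 : \iota(k) \le i-1\}$ is closed under $\iota$ and, because $\iota$ is fixed-point-free, contains no fixed point, so its cardinality $r_{i-1,i-1}(\iota)$ is even. Next, $r_{i-1,i}(\iota) - r_{i-1,i-1}(\iota)$ counts indices $k \le i-1$ with $\iota(k) = i$; but $\iota(i) > i$ means $\iota^{-1}(i) = \iota(i) > i > i-1$, so there are no such $k$. Hence $r_{i-1,i}(\iota) = r_{i-1,i-1}(\iota)$ is even, and combined with $r_{i-1,i}(\iota) \le 2k+1$ this forces $r_{i-1,i}(\iota) \le 2k$.

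The only potentially subtle point is recognizing that the fixed-point-free involution hypothesis is what converts a generic Rothe-diagram monotonicity inequality into the strictly stronger bound demanded by the lemma; everything else is bookkeeping. The boundary case $i=1$ is vacuous since $r_{0,1}(\iota) = 0 \le 2k$ for any $k \ge 0$.
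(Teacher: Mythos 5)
Your proof is correct and follows essentially the same route as the paper's, differing only in presentation. The paper argues by contradiction: it first uses evenness and the bound $r_{i-1,i-1}\le 2k+1$ to get $r_{i-1,i-1}\le 2k$, and then observes that $r_{i-1,i}>2k$ would force a $1$ in column $i$ above row $i-1$, which by the involution property would put a $1$ in row $i$ west of column $i$, contradicting that $(i,j)$ lies in the diagram. You argue directly, showing $r_{i-1,i}=r_{i-1,i-1}$ (since $\iota^{-1}(i)=\iota(i)>j>i$ means column $i$ has no $1$ above row $i$), so $r_{i-1,i}$ inherits evenness and the bound $2k+1$, forcing $\le 2k$. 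The underlying ingredients are identical. One small presentational gain on your side: the paper asserts that $r_{i-1,i-1}$ is even ``by antisymmetry of the matrices,'' a slightly opaque reference to $MJM^T$ being skew-symmetric, whereas you give the cleaner purely combinatorial reason that $\{k\le i-1:\iota(k)\le i-1\}$ is $\iota$-closed with no fixed points.
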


\emph{Proof}  The rank condition in cell $(i-1,i-1)$ must be even by antisymmetry of the matrices and must also be at most $2k+1$, hence it must be at most $2k$.  If the rank condition associated to cell $(i-1,i)$ is larger than $2k$ there must be a $1$ in column $i$ in the permutation matrix weakly above row $i-1$.  If that is the case then there must be a $1$ in the permutation matrix in row $i$ weakly to the left of column $i-1$, contradicting that the box $(i,j)$ was in the diagram of the fixed-point-free involution.  \qed

Note that Lemma \ref{lemma:OddRanks} implies that we cannot place a symplectic essential box with an odd rank condition on the immediate super-diagonal.  However, we can put a symplectic essential box with an odd rank condition anywhere else with an even symplectic essential box in the previous row on the immediate superdiagonal and no other symplectic essential boxes.  

We are now in a position to prove Theorem \ref{thm:basicElts}:

\emph{Proof of Theorem \ref{thm:basicElts}}  We will show each fixed point free involution is the greatest lower bound of some elements of $\mathcal{A}$.  Fix a fixed-point-free involution $\iota$.  

For each symplectic essential box at $(i,j)$ in the diagram of $\iota$ with even rank condition $r$, $\iota \le \iota'$ where $\iota'$ is the involution in $\mathcal{A}_e$ with exactly one symplectic essential box at $(i,j)$ of even rank condition $r$.   

For each symplectic essential box at $(i,j)$  in the diagram of $\iota$ with odd rank condition $r$, fix $\iota'$ where $\iota'$ is the involution in $\mathcal{A}_o$ with exactly two symplectic essential rank conditions, rank condition $r$  at $(i,j)$ and rank condition $r-1$ at $(i-1,i)$.  

By Lemma \ref{lemma:OddRanks}, $\iota \le \iota'$ for all of the $\iota'$ chosen above.  Since for each of the symplectic essential rank conditions for $\iota$ we have provided an $\iota'$ with that symplectic essential rank condition such that $\iota \le \iota'$, $\iota \le \text{glb}\{ \iota' \}$.  Further, as we have imposed no extra rank conditions not met by $\iota$, $\iota \ge \text{glb}\{ \iota' \}$.  \qed 

\section{Proof of Theorem \ref{thm: whichSchuberts}}

Now we to turn to the proof of Theorem \ref{thm: whichSchuberts}, which describes the degeneration of symplectic orbit closures.  Recall that we use a variant of the anti-diagonal term order in which we allow ties in the ordering of monomial terms such that the northwest-most determinant in a sum of determinants will be picked as the initial ``form" for the sum rather than choosing a single initial term.  This can be accomplished by weighting the columns of a matrix of variables $M$: assign weight $t^{\lceil j/2 \rceil -1}$ to the variable $m_{ij}$.  Then, we shall find initial forms by taking the limit as $t \to 0$.  

The equations for these orbit degenerations are part of a larger class of ideals whose generators are given in \cite{NWunions}; the equations for the orbits themselves remain unknown.  Data on the known equations is given in Section \ref{sec:data}.  In order to prove Theorem \ref{thm: whichSchuberts} we will first need some lemmas:


\begin{lemma}\label{lemma:SchemesRight}
If $w$ is a pair permutation for $\iota$, then $X^w_\circ$ and $(B_- \dom Y_\iota^\circ)$ intersect transversely in the reduced point $B_- \dom B_- w$.  
\end{lemma}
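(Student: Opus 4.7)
The plan is to verify three properties at $p := B_-\dom B_-w$: first, that $p$ lies in both $X^w_\circ$ and $B_-\dom Y_\iota^\circ$; second, that the two tangent spaces at $p$ have complementary dimensions; third, that they span all of $T_p\fl(\C^{2n})$. The conjunction of the second and third is scheme-theoretic transversality at $p$, which forces the intersection to be zero-dimensional and reduced there; a short global argument then rules out any other intersection point.

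For incidence, $p\in X^w_\circ$ is immediate from $w\in B_-wB_+$. For $p\in B_-\dom Y_\iota^\circ$, I would check the combinatorial identity $w^{-1}\overline{J}_n w = \iota$ as follows: starting from top outlet $i$, cross along $w$'s wire to left outlet $w(i)$, follow the $\overline{J}_n$-arc to left outlet $\overline{J}_n(w(i))$, then return through $w$'s wire to top outlet $w^{-1}(\overline{J}_n(w(i)))$; by construction of the pair-permutation diagram, this return outlet is exactly $\iota(i)$. Under the Richardson--Springer map $M\mapsto MJM^T$, the identity $w^{-1}\overline{J}_n w = \iota$ implies $wJw^T$ has nonzero entries exactly at the positions prescribed by $\iota$ with the correct antisymmetric signs, so it sits in the $B_-$-orbit of the antisymmetric matrix representing $\iota$, placing $B_-w$ in $Y_\iota$.

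For the tangent-space computation, I would use the matrix chart for $\fl(\C^{2n})$ near $p$ coming from the Bruhat decomposition. The tangent space $T_p X^w_\circ$ is spanned by the coordinate directions indexed by the Rothe diagram of $w$. The tangent space $T_p(B_-\dom Y_\iota^\circ)$ is the image of the differential of the $Sp_n$-action at $p$, applied to the Lie algebra $\spn$. I would pick a vector-space complement to the Lie algebra of the stabilizer of $p$ inside $\spn$, and verify that this complement maps injectively into the coordinate directions \emph{outside} the Rothe diagram of $w$; a dimension count, matching $\ell(w)$ against the codimension of $Y_\iota$, then upgrades this injectivity to a surjective spanning statement and yields transversality. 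The minimality of crossings in the pair-permutation diagram is what forces the image of $\spn$ to land in exactly the complementary directions: any extra crossing in $w$ would remove a Rothe coordinate that $\spn$ cannot fill in.

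Finally, set-theoretic uniqueness of the intersection follows from transversality together with the matching codimensions, whose sum is $\dim\fl(\C^{2n})$, forcing the intersection to be $0$-dimensional; since $X^w_\circ$ is a single $B_-$-orbit on which $Y_\iota$ meets cleanly at $p$, there is no room for a second intersection point. The main obstacle will be the tangent-space transversality step itself: translating the combinatorial ``minimum crossings'' property of pair permutations into the required linear-algebraic surjectivity onto the complement of the Rothe diagram demands a careful entry-by-entry analysis rather than an abstract general argument.
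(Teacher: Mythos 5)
Your overall strategy matches the paper's: verify the two tangent spaces at the $T$-fixed point $B_-\dom B_-w$ meet only in zero and combine this with the dimension count of the companion lemma (Lemma~\ref{lemma:dimsRight}) to conclude transversality. Where you diverge is in execution and in where you locate the difficulty. The paper does not work in a Bruhat-cell coordinate chart at all; it passes immediately to Lie algebras, writes the condition as
\[
(\b_- w^{-1} +w^{-1}\b_-) \cap (\b_- w^{-1} +w^{-1}\spn)\subseteq \b_- w^{-1},
\]
conjugates by $w$, and observes that the whole question reduces to a single clean inclusion of Lie subalgebras: $\b_- \cap \spn \subseteq w\b_- w^{-1}$. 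It then settles this by a direct entry-wise description of $\spn$ using the $2\times 2$-block relation $A_{kl}=J_2 A_{lk}^T J_2$, which forces most entries of anything in $\b_-\cap\spn$ to vanish for positional reasons. That reduction is the step your proposal does not see; once you have it, the computation is mechanical and does not involve the Rothe diagram of $w$ at all.

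Relatedly, your worry that ``the minimality of crossings in the pair-permutation diagram is what forces the image of $\spn$ to land in exactly the complementary directions'' is misplaced for this particular lemma. The triviality of the tangent-space intersection is established by the paper without invoking minimality of $w$; what minimality buys is the dimension identity $\dim Y_\iota + \dim X^w = (2n)^2$ in Lemma~\ref{lemma:dimsRight}, since only for a pair permutation does $\ell(w)=c+2r$ hold. So the ``injectivity plus dimension count $\Rightarrow$ spanning'' logic you propose is correct, but the hard and the easy parts are swapped relative to what you expect: the injectivity statement is the soft structural fact about $\b_-\cap\spn$, and minimality enters only through the dimension formula. On the other hand, you do one thing the paper elides in this lemma: you explicitly verify incidence, i.e.\ that $B_-w$ actually lies in $B_-\dom Y_\iota^\circ$, via $w^{-1}\overline{J}_n w=\iota$ (the paper defers this to Lemma~\ref{lemma:PermsRight}). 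If you adopt the paper's Lie-algebra reduction, your outline becomes a complete proof; as written, the ``careful entry-by-entry analysis'' you flag as the obstacle is exactly where you would rediscover the inclusion $\b_-\cap\spn\subseteq w\b_-w^{-1}$.
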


\begin{proof}
We need to show that $T_{B_-\dom B_- w} X^w \cap T_{B_-\dom B_- w} Y_\iota =\{ 0 \} \subseteq T_{B_-\dom B_- w}B.$  Phrasing this statement in terms of the corresponding Lie algebras, we must show that 
\[
(\b_- w^{-1} +w^{-1}\b_-) \cap (\b_- w^{-1} +w^{-1}\spn)\subseteq \b_- w^{-1},
\]
i.e. $(w\b_- w^{-1} +\b_-) \cap (w\b_- w^{-1} +\spn)\subseteq w\b_- w^{-1}$.  Hence it shall be sufficient to show  that $\b_- \cap \spn\subseteq w \b_- w^{-1}$.  

The zero entries in $w\b_-w^{-1}$ are those $(i,j)$ such that $w(i)>w(j)$.  Fix an entry $(i,j)$ of an arbitrary matrix $M \in \b_- \cap \spn$.  We shall show that if $w(i)>w(j)$ then $M_{ij}=0$.  First notice that if $i<j$ then $M_{ij}=0$ as $M \in \b_-$.  Now examine the case that $i>j$.    
\[
\spn=\{ [A_{kl}]_{1\le k,l \le n}: A_{kl}\text{ is a } 2 \times 2\text{ matrix s.t. } A_{kl}=J_2A_{lk}^TJ_2\}
\]
where $J_2$ is the $2 \times 2$ matrix $\left(\begin{smallmatrix}0 & 1 \\-1 & 0\end{smallmatrix}\right)$.  This is equivalent to insisting that matrices $M \in \spn$ have entries $M_{ij}$ such that 
\[
M_{ij}=\begin{cases}
M_{j+1i+1} & i \text{ odd  and } j \text{ odd}\\
-M_{ji} & i \text{ odd  and } j \text{ even}\\
-M_{ji} & i \text{ even  and } j \text{ odd}\\
M_{j-1i-1} & i \text{ even  and } j \text{ even}
\end{cases}
\]
This means that if $i>j$, $M_{ij}=M_{kl}$ for $k<l$ since $M \in \spn$ and as $M \in b_-$ this implies that $M_{ij}=0$.  \qed

\begin{lemma}\label{lemma:dimsRight}
$\dim Y_\iota = (2n)^2-\dim X^w$
\end{lemma}

\emph{Proof} Let $c$ be the number of countryside involutions contained in $\iota$ and let $r$ be the number of rainbow involutions contained in $\iota$  Notice that $\dim Y_\iota=(8n^2+n-l(\iota))/2=(8n^2+n-(n+2c+4r))/2$ and that $\dim X^w=l(w)=c+2r$.  Then we can observe that $\dim X^w=(2n)^2-\dim Y_\iota$.  \qed 

\begin{lemma}\label{lemma:PermsRight}
The elements $w \in S_{2n}$ of minimal length such that $w\overline{J}w^{-1}= \iota$ are the pair permutations for $\iota$.  
\end{lemma}

\emph{Proof}  We will show first that $wJw^{-1}= \iota$ by instead showing the equivalent $w^{-1}Jw= \iota$.  The best way to see this is to follow a pair of strands used to make a circle in the diagram description of pair permutations.  There are two very similar cases, dependent on whether $w(i)$ is odd or even for a fixed, general $i$.  See Figure \ref{fig:permsRight}, which is labeled for the case that $w(i)$ is odd, though the same picture with different labels applies for the case that $w(i)$ is even.  If $w^(i)$ is odd then $\overline{J}_nw(i)=w(i)+1$ and the definition of a pair permutation requires that $w(i+1)=\iota(i)$.  Similarly, if $w(i)$ is even then $\overline{J}_nw(i)=w(i)-1$ and the definition of a pair permutation requires that $w(i-1)=\iota(i)$.  The pair permutations are of minimal length because we minimize the number of wire crossings.  \qed

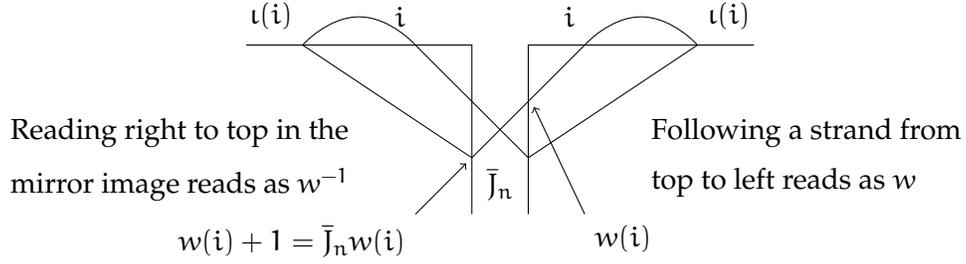
\begin{figure}
\begin{center}
\begin{tikzpicture}[x=.75cm, y=.75cm]
\draw plot [smooth, tension=1] coordinates {(1,3) (2,3.5) (3,3)};
\draw plot [smooth, tension=1] coordinates {(6,3) (7,3.5) (8,3)}; 
\draw(0,3)--(4,3)--(4,0);
\draw (5,0)--(5,3)--(9,3);
\draw (3,3)--(4,2)--(5,1)--(8,3);
\draw (1,3)--(4,1)--(5,2)--(6,3);
\node[left] at (6,3.5) {$i$};
\node [right] at (8,3.5) {$\iota(i)$};
\node[left] at (1,3.5) {$\iota(i)$};
\node[left] at (3,3.5) {$i$};
\node[above right] at (7,1) {Following a strand from};
\node[below right] at (7,1) {top to left reads as $w$};
\node[above left] at (2,1) {Reading right to top in the};
\node[below left] at (2,1) {mirror image reads as $w^{-1}$};
\node [below] at (4.5,1) {$\overline{J}_n$};
\node [below left] at (3,0) {$w(i)+1=\overline{J}_nw(i)$};
\draw [->] (3,0)--(3.9,.9);
\draw [->] (6,0)--(5.1,1.9);
\node [below right] at (6,0) {$w(i)$};
\end{tikzpicture}
\caption{The proof of Lemma \ref{lemma:PermsRight}.}
\label{fig:permsRight}
\end{center}
\end{figure}

\emph{Proof of Theorem \ref{thm: whichSchuberts}}  By \cite{Brion}, the $Y_\iota$ degenerate to a union of Schubert varieties and it is enough to show that $(B_- \dom Y_\iota) \cap X^w \ne \varnothing$, where $X^w=B_- \dom B_- w B_+$.  By Borel's fixed point theorem, $(B_- \dom Y_\iota) \cap X^w \ne \varnothing$ if and only if $(B_- \dom Y_\iota \cap X^w)^{T_{Sp_n}} \ne \varnothing$.   Here $T_{Sp_n}$ is the torus for $Sp_n$ , $T$ is the standard torus in $GL_{2n}\C$ and $[1,w]=\{v \in S_{2n}: 1 \le v \le w\}$.  But 
\begin{align*}
(B_- \dom Y_\iota \cap X^w)^{T_{Sp_n}} &= (B_- \dom Y_\iota)^{T_{Sp_n}} \cap (X^w)^T\\
&= (B_- \dom Y_\iota)^{T_{Sp_n}} \cap [1,w]\\
&= \cup_{\iota' \le \iota}(B_- \dom Y_{\iota'}^\circ)^{T_{Sp_n}} \cap [1,w]\\
&= \{ v: vJv^{-1}= \iota\text{ and } v \text{ is of minimum length}\}.
\end{align*}
Lemmas \ref{lemma:SchemesRight}, \ref{lemma:dimsRight} and \ref{lemma:PermsRight} now complete the proof.  
\end{proof}

\section{Data and Remaining Open Questions}\label{sec:data}

In an ideal world, for each element $\iota' \in \mathcal{A}$ we would have a Gr\"obner basis for $I(Y_{\iota'})$ under some antidiagonal term order.  Then for each fixed point free involution $\iota$ we could impose all of the equations for each $\iota' \in \mathcal{A}$ such that $Y_{\iota'} \supseteq Y_\iota$ and this would be a Gr\"obner basis for $Y_{\iota'}$ by results from \cite{KnutsonFrob}.  Sadly, we do not have a Gr\"obner basis for elements of $\mathcal{A}$ in general, but we summarize what we know of the equations in this section.   Unusually, in this case the equations for the degenerations are known, see \cite{NWunions}, even though the equations for the original schemes are unknown.  

The \defi{pfaffian} of an antisymmetric matrix is a choice of the square root of its determinant, which is a square.  Since we shall use it as a generator for an ideal we shall not concern ourselves with which choice of the square root.  

\begin{proposition}
Assume $\iota$ has one symplectic essential box at $(2r-1, 2r)$ and that that box has associated rank condition $2r-2$.  Then the reduced variety $Y_\iota$ is defined by the pfaffians of rows and columns $\{1, \ldots, 2r\}$ of $MJM^T$ where $M$ is a $2n \times 2n$ antisymmetric matrix of variables.  
\end{proposition}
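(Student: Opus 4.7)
The plan is to establish the proposition by combining a rank-theoretic inclusion with a codimension-plus-irreducibility argument.

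First I would verify the containment $Y_\iota \subseteq V(\pf)$, where $\pf$ denotes the pfaffian of the principal $2r \times 2r$ submatrix of $MJM^T$. For any $M \in Y_\iota$, the hypothesis on the essential box forces the northwest $(2r-1)\times 2r$ block of $MJM^T$ to have rank at most $2r-2$. Since $MJM^T$ is antisymmetric, its principal $(2r-1)\times(2r-1)$ submatrix (antisymmetric of odd size) has rank at most $2r-2$ automatically, so the ranks of the $(2r-1)\times 2r$ and the principal $2r\times 2r$ northwest blocks coincide. The latter, being antisymmetric of even size $2r$, has rank at most $2r-2$ if and only if $\pf = 0$.

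Next I would identify $\iota$ combinatorially and compute its codimension via Lemma \ref{lemma:dimsRight}. The hypothesis pins down $\iota$ as the countryside involution $\overline{J}_{r-1} \oplus 3412 \oplus \overline{J}_{n-r-1}$; this is verified in the small case $n=3$, $r=2$ by the involution $215634$, whose symplectic diagram has a single essential box at $(3,4)$ with rank $2$, and the general case follows by the $\oplus$-rules of Section 2. Such an involution contains exactly one countryside substructure and no rainbow, so by Lemma \ref{lemma:dimsRight}, $\codim Y_\iota = c + 2r' = 1$. Since $\overline{Y_\iota}$ is irreducible as an orbit closure, it is an irreducible component of the hypersurface $V(\pf)$.

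To rule out other components I would show that $\pf$ is an irreducible polynomial in the coordinate ring on $2n \times 2n$ matrices. By a Cauchy--Binet-type expansion for pfaffians,
\[
\pf\bigl((MJM^T)_{[2r]\times[2r]}\bigr) \;=\; \sum_{\substack{T\subseteq [n] \\ |T|=r}} \pm \det\bigl(M_{[2r],\, \bigcup_{k\in T}\{2k-1,2k\}}\bigr),
\]
a signed sum of maximal minors of the northwest $2r \times 2n$ rectangle, indexed by choices of $r$ disjoint column pairs $\{2k-1,2k\}$. I would prove irreducibility by a variable-isolation argument: viewing $\pf$ as an affine-linear function in the entries of any single column pair, the coefficients are themselves nonzero maximal minors whose only common factor is a unit, ruling out nontrivial factorizations. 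The main obstacle I anticipate is making this argument clean in full generality across all choices of $r$ and $n$. If a direct irreducibility proof proves awkward, one can instead bypass it by invoking Theorem \ref{thm: whichSchuberts}: compute the initial form of $\pf$ under the column-weighting $t^{\lceil j/2 \rceil - 1}$ and match it with the defining equations of $\bigcup_{\pi \in P(\iota)} X_\pi$ from \cite{NWunions}, so that flatness of the Gr\"obner degeneration, combined with the already-established containment $(\pf) \subseteq I(Y_\iota)$, forces the reduced variety defined by $\pf$ to equal $Y_\iota$.
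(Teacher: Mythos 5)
Your proof takes a genuinely different route from the paper's. The paper simply pulls back the Fulton generator: since $\iota$'s Rothe diagram imposes (via antisymmetry of $MJM^T$) the single nontrivial rank condition rank $\le 2r-2$ on the northwest $2r \times 2r$ block, Fulton's Theorem identifies the defining equation as the determinant of that block, and the determinant of an even antisymmetric matrix is the square of its pfaffian, so the \emph{reduced} variety is cut out by the pfaffian. That argument is short but leans silently on the fact that the pfaffian ideal of an antisymmetric matrix is prime and that reducedness survives pullback along $M \mapsto MJM^T$ (a smooth, surjective map with fiber $Sp_n$). Your argument instead builds the equality from the outside in: containment by a rank-parity argument, codimension one from Lemma \ref{lemma:dimsRight}, irreducibility of the orbit closure, then irreducibility of the pfaffian polynomial. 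That route avoids invoking Fulton's Theorem on a non-generic matrix, which is a conceptual gain, and your identification of $\iota$ as the countryside involution $\overline{J}_{r-1}\oplus 3412 \oplus \overline{J}_{n-r-1}$ with $c=1$, $r'=0$ is correct (it is also worth noting explicitly that this forces $r<n$, since the rank at $(2n-1,2n)$ is always $2n-1$).

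The genuine gap is the irreducibility of $\pf\bigl((MJM^T)_{[2r]\times[2r]}\bigr)$ as a polynomial in the $4n^2$ entries of $M$. Your ``variable-isolation'' sketch is not yet an argument: when you fix a column pair $\{2k-1,2k\}$ and view $\pf$ as affine-linear in those $4r$ variables, the ``coefficients'' are not single minors but signed sums of minors, and showing these sums share no common factor is exactly the hard point, not a reduction of it. You should either (a) observe that height-one primes in a polynomial ring are principal, so $I(Y_\iota)=(p)$ for some irreducible $p$ dividing $\pf$, and then rule out a proper factor by a degree count --- for instance by matching the degree of $Y_\iota$ against the degree of its Gr\"obner degeneration $\bigcup_{\pi\in P(\iota)} X_\pi$, using that flat degeneration preserves Hilbert polynomials --- or (b) argue more geometrically that $V(\pf)\cap GL_{2n}$ meets only one codimension-one $B_-\times Sp_n$ orbit (the one indexed by your $\iota$), and that $\det M \nmid \pf$ since $\deg\pf = 2r < 2n = \deg\det M$, so $V(\pf)$ has a unique prime component. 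Either of these would close the proof, but as written the irreducibility step is asserted rather than established. One small inaccuracy along the way: it is not that the ranks of the $(2r-1)\times 2r$ and $2r\times 2r$ northwest blocks coincide; rather, the $2r\times 2r$ block has rank at most one more than the $(2r-1)\times 2r$ block, hence at most $2r-1$, hence at most $2r-2$ by parity of antisymmetric rank. Finally, the statement's ``antisymmetric matrix of variables'' is a typo in the paper ($M$ is a generic $2n\times 2n$ matrix of variables; $MJM^T$ is what is antisymmetric), and you should not read that hypothesis literally.
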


\emph{Proof} Expand the symplectic diagram to the full Rothe diagram using antisymmetry of the matrices and apply Fulton's Theorem to $MJM^T$ to get one generator--the determinant of the northwest $2r \times 2r$ submatrix.  Then recall that the determinant of an antisymmetric matrix, which is the only Fulton generator in this case, is a square with square root the pfaffian of that matrix.   \qed

\begin{lemma}
Providing the equations for the reduced varieties associated to the $Y_\iota$ requires calculating the radical of a variety with ideal given by determinants of $MJM^T$.  
\end{lemma}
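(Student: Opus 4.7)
The plan is to run the Richardson--Springer change of variables $\phi : M \mapsto MJM^{T}$ that appears in the proof of the correspondence between $Sp_n$-orbits and fixed-point-free involutions, then reduce to a Fulton-style determinantal description on antisymmetric matrices, and finally observe that the pull-back ideal is not automatically radical.

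First I would note that from the construction of the Richardson--Springer map, $Y_\iota = \phi^{-1}(Z_\iota)$ set-theoretically, where $Z_\iota$ is the $B_-$-orbit (acting by $b \cdot A = bAb^{T}$) on invertible antisymmetric $2n \times 2n$ matrices indexed by $\iota$. The closures $\overline{Z_\iota}$ sit inside the affine space of antisymmetric matrices and are distinguished by northwest rank conditions coming from the rank matrix of $\iota$ restricted to its symplectic diagram; this is the antisymmetric incarnation of Fulton's Theorem~\ref{thm:Fulton1992} and is exactly the content of the $\iota$-indexing of orbits. Consequently $Y_\iota$ is cut out set-theoretically by imposing those rank bounds on the northwest submatrices of $\phi(M) = MJM^{T}$, which in turn is expressed by the vanishing of the appropriate minors of $MJM^{T}$. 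Call this ideal $J_\iota \subseteq \C[m_{ij}]$; then $V(J_\iota) = Y_\iota$ as sets.

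To obtain the defining ideal $I(Y_\iota)$ of the reduced subscheme, one must therefore compute $\sqrt{J_\iota}$. The second half of the proof is to point out that $J_\iota$ is genuinely not radical in general, so this radical computation cannot be avoided. The previous proposition already exhibits this phenomenon: the Fulton generator is the determinant of a $2r \times 2r$ principal submatrix of the antisymmetric matrix $MJM^{T}$, and any such determinant is a perfect square, namely the square of the associated pfaffian. Thus $J_\iota$ contains $\pf^{2}$ rather than $\pf$, and passing to $\sqrt{J_\iota}$ replaces the generator with its pfaffian square root.

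The step I expect to be the main obstacle -- and indeed the reason the lemma is stated as a negative rather than as an explicit formula -- is actually computing $\sqrt{J_\iota}$ for arbitrary $\iota$. When essential boxes lie off the immediate superdiagonal, the relevant minors of $MJM^{T}$ are determinants of (non-principal, and therefore non-antisymmetric) submatrices, so no uniform pfaffian substitution is available; neither is it clear that $J_\iota$ becomes radical after any finite list of such square-root replacements. For the lemma itself, however, one only needs the set-theoretic equality $V(J_\iota) = Y_\iota$ from the first paragraph together with the nonradicality witnessed by the single-box case, and the proof concludes by taking radicals on both sides.
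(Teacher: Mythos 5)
Your proposal follows essentially the same route as the paper's proof: apply the Richardson--Springer map $M \mapsto MJM^{T}$, impose the Fulton rank conditions to get a determinantal ideal cutting out $Y_\iota$ set-theoretically, then pass to the radical to obtain the reduced scheme. The paper's own proof is only three sentences and leaves most of this implicit; your version fills in the set-theoretic identification and adds the genuinely useful observation that the determinantal ideal is provably non-radical in general (since a principal minor of an antisymmetric matrix is the square of its pfaffian), which is what makes the radical computation unavoidable rather than vacuous.
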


\emph{Proof}  We wish to describe the variety that corresponds to the orbit $Y_\iota$.  We do this by applying the map given by \cite{RichardsonSpringer1990} $M \mapsto MJM^T$ to a matrix of variables $M$.  Then, we apply Fulton's Theorem (Theorem \ref{thm:Fulton1992}) and take the radical to get the correct description for the reduced scheme.  \qed

This has proven computationally infeasible even in fairly small examples--further insight is needed.  We have had some success with computing a few examples using Macaulay2 \cite{M2}, summarized in Table \ref{table:eqns}.  In Table \ref{table:eqns}, $\det((MJM^T)_{R,C})$ is the determinant of rows $R$ and columns $C$ of $MJM^T$, where $M$ is a matrix of variables.  Similarly, $\pf((MJM^T)_{R,C})$ is the pfaffians of rows $R$ and columns $C$ of $MJM^T$, where $M$ is a square matrix of variables of size the number of elements that $\iota$ is permuting.  Note that the equations found in Table \ref{table:eqns} are mostly not a Gr\"obner basis for the ideals they generate.  

\begin{table}[htdp]
\caption{Generators for some $I(Y_\iota)$}
\begin{center}
\begin{tabular}{|c|c|}
\hline
$\iota$ & Ideal Generators \\ \hline 
$4321\oplus \overline{J}_{n-2}$ & $\pf((MJM^T)_{\{1,2\}\{1,2\}})$, $\pf((MJM^T)_{\{1,3\},\{1,3\}})$ \\ \hline
$216543$ & $\pf((MJM^T)_{\{1,2,3,4\},\{1,2,3,4\}})$, $\pf((MJM^T)_{\{1,2,3,5\},\{1,2,3,5\}})$ \\ \hline
$351624$ & $\pf((MJM^T)_{\{1,2\},\{1,2\}})$, $\pf((MJM^T)_{\{1,2,3,4\},\{1,2,3,4\}})$\\ \hline
\end{tabular}
\end{center}
\label{table:eqns}
\end{table}

\bibliography{SpnOrbitsRefs}
\bibliographystyle{amsalpha}

\end{document}